\documentclass[12pt,a4paper]{article}
\usepackage[utf8]{inputenc}
\usepackage{amsmath}
\usepackage{amsfonts}
\usepackage{amssymb}
\usepackage{amsthm}
\usepackage{graphicx}
\usepackage{verbatim}
\usepackage{hyperref}
\usepackage{caption}
\usepackage{subcaption}
\usepackage{color}

\newtheoremstyle{firststyle}  
  {5mm}       
  {3mm}       
  {\itshape}   
  {}        
  {\bfseries}  
  {}   
  {3mm}       
  {}           

\theoremstyle{firststyle}
\newtheorem{theorem}{Theorem}[section]
\newtheorem{lemma}[theorem]{Lemma}

\newtheorem{corollary}[theorem]{Corollary}

\newtheoremstyle{secondstyle}  
  {3mm}       
  {3mm}       
  {\normalfont}   
  {}        
  {\bfseries}  
  {\quad}   
  {1mm}       
  {}           

\theoremstyle{secondstyle}

\newtheorem{remark}{Remark}

\newtheorem{example}{Example}
\newtheorem{algorithm}{Algorithm}

\usepackage{booktabs} 
\captionsetup{font=small,labelfont=bf,labelsep=quad,justification=raggedright,singlelinecheck=false}

\title{Reduced basis methods\textendash an application to variational discretization of parametrized elliptic optimal control problems} 
\author{Ahmad Ahmad Ali\footnote{Schwerpunkt Optimierung und Approximation, 
		Universit\"at Hamburg, Bundesstra{\ss}e 55, 20146 Hamburg, Germany.} \& Michael Hinze\footnote{Schwerpunkt Optimierung und Approximation, 
		Universit\"at Hamburg, Bundesstra{\ss}e 55, 20146 Hamburg, Germany.}}

\date{}
\begin{document}
\maketitle

{\small {\bf Abstract:} We consider a class of parameter-dependent optimal control problems of elliptic PDEs with constraints of general type on the control variable. Applying the concept of variational discretization, \cite{hinze2005variational}, together with  techniques from the Reduced basis method, we construct a reduced basis surrogate model for the control problem. We establish estimators for the greedy sampling procedure which only involve the residuals of the state and the adjoint equation, but not of the gradient equation of the optimality system. The estimators are sharp up to a constant, i.e. they are equivalent to the approximation erros in control, state, and adjoint state. Numerical experiments show the performance of our approach.}

\section{Introduction}
The research in this work is motivated by the reduced basis approaches of \cite{bader2016certified} applied to approximate the solution manifold of the parameter dependent control constrained optimal control problem \eqref{P}. The approach taken there uses a fully discrete treatment of the optimal control problem \eqref{P}, so that the constructed a posteriori error estimators involve the residuals of the state, of the adjoint and of the gradient equation of the corresponding optimality conditions. Since the gradient equation in the control constrained case is nonsmooth one expects large contributions of the control residual in the estimation process. Our approach uses variational discretization \cite{hinze2005variational} of \eqref{P} which avoids explicit discretization of the control variable, see problem \eqref{PN}. This approach then allows us to construct reliable and effective a posteriori error bounds only involving the residuals of the state and the adjoint state, respectively, see Theorem \ref{thm:uyp}. Moreover, in Corollary \ref{thm:u} we propose an estimator for the relative error in the controls which only involves the residuals of the state and the adjoint state. We test our approach at the numerical examples presented in  \cite{bader2016certified}. It is one important result of our work that the reduced basis spaces constructed with our approach for a given error level have much smaller dimensions than the respective spaces constructed with the approaches of \cite{bader2016certified}. In the present work we focus on the approximation quality of the reduced spaces constructed with our approach from the dimensionality point of view. We do not discuss questions related to offline-online decomposition in our approach.

We note that our numerical analysis related to the error equivalence of Theorem \ref{thm:uyp} is motivated by techniques frequently used in the convergence analysis of adaptive finite element methods for optimal control problems, see e.g. \cite{gong2017adaptive}. For excellent introductions to the reduced basis method for approximations of parameter dependent elliptic PDEs we refer the reader to \cite{haasdonk2017reduced,rozza2007reduced}. For a discussion of reduced basis approaches to approximate parameter dependent optimal control problems we refer the reader to \cite{bader2016certified}, where also further literature can be found, and also detailed discussions related to offline-online decomposition in the numerical implementation are provided.

\section{General setting}
Let $\mathcal{P}\subset \mathbb{R}^p$, $p \in \mathbb{N}$, be a compact set of parameters, and  for a given parameter $\mu \in \mathcal{P}$ we consider the  variational discrete (\cite{hinze2005variational}) control problem 
\begin{equation}\label{P}
(\mathbb{P}) \quad \begin{array}{l}
\min_{(u,y) \in U_{ad} \times Y} J(u,y):=\frac{1}{2} \| y-z \|_{L^2(\Omega_0)}^2  + \frac{\alpha}{2} \| u \|_U ^2 \\
\text{subject to}
\end{array}
\end{equation}
\begin{align}
a(y,v;\mu)= b(u,v;\mu) + f(v;\mu)   \quad \forall \, v \in Y. \label{wpde}
\end{align}
Here (\ref{wpde}) represents a finite element discrete elliptic PDE in a bounded domain $\Omega \subset \mathbb{R}^d$ for $d \in \{1, 2, 3\}$   with boundary $\partial \Omega$.  $Y$ denotes the space of piecewise linear and continuous finite elements. We assume the approximation process is conforming. The space $Y$ is equipped by the inner product $(\cdot,\cdot)_Y$ and the norm $\|\cdot\|_Y:=\sqrt{(\cdot,\cdot)_Y}$, in addition, there exist constants $\rho_1, \rho_2 >0$ such that there holds 
\begin{align}
\rho_1 \|y\|_{H^1(\Omega)} \leq \|y\|_{Y} \leq \rho_2 \|y\|_{H^1(\Omega)}  \quad \forall \, y \in Y, \label{norm_eqv}
\end{align}
with $\|\cdot\|_{H^1(\Omega)}$ being the norm of the classical Sobolev space $H^1(\Omega)$.

The controls are from a real Hilbert space $U$ equipped by the inner product $(\cdot,\cdot)_U$ and the norm $\|\cdot\|_U:=\sqrt{(\cdot,\cdot)}_U$, and the set of admissible controls $U_{ad} \subseteq U$ is assumed to be non-empty, closed and convex.

We denote by $\Omega_0 \subseteq \Omega$ an open subset, and $L^2(\Omega_0)$  the classical Lebesgue space endowed with the standard inner product $(\cdot,\cdot)_{L^2(\Omega_0)}$ and the  norm $\|\cdot\|_{L^2(\Omega_0)}:=\sqrt{(\cdot,\cdot)_{L^2(\Omega_0)}}$.The desired state $z \in L^2(\Omega_0)$ and the parameter  $\alpha >0$  are given data.

The parameter dependent bilinear form $a(\cdot,\cdot ;\mu): Y \times Y \to \mathbb{R}$  is continuous 
\begin{align*}
\gamma(\mu):=\sup_{y, v \in Y\backslash\{0\}} \frac{|a(y,v;\mu)|}{ \|y\|_{Y}  \|v\|_{Y}} \leq \gamma_0 < \infty \qquad \forall\, \mu \in \mathcal{P},
\end{align*}
and coercive 
\begin{align*}
 \beta(\mu):=\inf_{y \in Y\backslash\{0\}} \frac{a(y,y;\mu)}{ \|y\|^2_{Y}} \geq \beta_0 > 0 \qquad \forall\, \mu \in \mathcal{P},
\end{align*}
where $\gamma_0$ and $\beta_0$ are real numbers independent of $\mu$. The parameter dependent bilinear form $b(\cdot,\cdot;\mu): U \times Y \to \mathbb{R}$ is  continuous
\begin{align*}
\kappa(\mu):=\sup_{(u, v) \in U\times Y \backslash\{(0,0)\}} \frac{|b(u,v;\mu)|}{ \|u\|_{U}  \|v\|_{Y}} \leq \kappa_0 < \infty \qquad \forall\, \mu \in \mathcal{P},
\end{align*}
where $\kappa_0$ is a real number independent of $\mu$. Finally, $f(\cdot;\mu) \in Y^\ast$ is a parameter dependent linear form, where $Y^\ast$ denotes the topological dual of $Y$ with norm $\|\cdot\|_{Y^\ast}$ defined by
\begin{align*}
\|l(\cdot;\mu) \|_{Y^{\ast}}:= \sup_{\|v \|_Y=1} l(v;\mu),
\end{align*}
for a give functional $l(\cdot;\mu) \in Y^\ast$ depending on the parameter $\mu$. We assume that there exists a constant $\sigma_0$ independent of $\mu$ such that
\begin{align*}
\sup_{v \in Y \backslash\{0\}} \frac{|f(v;\mu)|}{ \|v\|_{Y}} \leq \sigma_0 < \infty \qquad \forall\, \mu \in \mathcal{P}.
\end{align*}

We find it convenient to  introduce here for the upcoming analysis the Riesz isomorphism  $R: Y^\ast \to Y$ which is defined for a given $f \in Y^\ast$ by the unique element $R f \in Y$ such that
\begin{align*}
f(v)=(R f, v)_Y  \quad \forall \, v \in Y.
\end{align*}

Under the previous assumptions one can verify that the problem $(\mathbb{P})$ admits a unique solution for every $\mu \in \mathcal{P}$. The corresponding first order necessary conditions, which are also sufficient in this case, are stated in the next result. For the proof see for instance \cite[Chapter~3]{hinze2008optimization}.
\begin{theorem}
\label{thm:oc}
Let $u \in U_{ad}$ be the solution of $(\mathbb{P})$ for a given $\mu \in \mathcal{P}$. Then there exist a state $y \in Y$ and an adjoint state $p \in Y$ such that there holds
\begin{align}
& a(y,v;\mu)=b (u, v;\mu) + f(v;\mu)  \quad \forall \, v \in Y, \label{os_y}\\
& a(v,p;\mu)=(y-z,v)_{L^2(\Omega_0)} \quad \forall \, v \in Y, \label{os_p}\\
& b(v-u,p;\mu)  + \alpha (u, v-u)_U \geq 0 \quad \forall \, v \in U_{ad}. \label{os_u}
\end{align}
\end{theorem}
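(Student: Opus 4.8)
The plan is to eliminate the state equation through the control-to-state map and reduce $(\mathbb{P})$ to the minimization of a quadratic functional over $U_{ad}$. First I would observe that, since $a(\cdot,\cdot;\mu)$ is continuous and coercive on $Y\times Y$, the Lax–Milgram lemma gives, for every $u\in U$, a unique solution $y=S(u)\in Y$ of the state equation \eqref{wpde}; the continuity of $b(\cdot,\cdot;\mu)$ in its first argument shows that $S:U\to Y$ is affine and bounded, with linear part $S_0$ characterized by $a(S_0 u,v;\mu)=b(u,v;\mu)$ for all $v\in Y$. Substituting $y=S(u)$ into the objective yields the reduced functional $\hat J(u):=J(u,S(u))=\tfrac12\|S(u)-z\|_{L^2(\Omega_0)}^2+\tfrac{\alpha}{2}\|u\|_U^2$, which is quadratic, continuous, coercive on $U$ (because $\hat J(u)\ge\tfrac{\alpha}{2}\|u\|_U^2$) and, thanks to $\alpha>0$, strictly convex. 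Hence $\hat J$ attains its infimum over the nonempty, closed, convex set $U_{ad}$ at a unique point, which is exactly the solution $u$ of $(\mathbb{P})$.

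Since $\hat J$ is convex and Fréchet differentiable, the minimizer $u$ is characterized by the variational inequality $\hat J'(u)(v-u)\ge 0$ for all $v\in U_{ad}$, and convexity makes this condition sufficient as well. It then remains to compute $\hat J'(u)$ and rewrite it in the claimed form. A direct computation gives $\hat J'(u)v=(S(u)-z,\,S_0 v)_{L^2(\Omega_0)}+\alpha(u,v)_U$. Setting $y:=S(u)$ (so that \eqref{os_y} holds by construction) and introducing the adjoint state $p\in Y$ as the unique solution of \eqref{os_p}, i.e.\ $a(w,p;\mu)=(y-z,w)_{L^2(\Omega_0)}$ for all $w\in Y$, the choice $w=S_0 v$ together with the defining relation of $S_0$ (with test function $p$) yields the duality identity $(y-z,S_0 v)_{L^2(\Omega_0)}=a(S_0 v,p;\mu)=b(v,p;\mu)$. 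Substituting this into $\hat J'(u)(v-u)\ge 0$ turns the variational inequality into $b(v-u,p;\mu)+\alpha(u,v-u)_U\ge 0$ for all $v\in U_{ad}$, which is \eqref{os_u}. Together with \eqref{os_y} and \eqref{os_p} this is the asserted optimality system; see also \cite[Chapter~3]{hinze2008optimization}.

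The argument is essentially textbook, so I do not expect a genuine obstacle; the points that require a little care are the well-posedness of the adjoint equation \eqref{os_p}, where one checks that the transposed bilinear form $(w,q)\mapsto a(w,q;\mu)$ still satisfies the Lax–Milgram hypotheses — it does, since the coercivity and continuity bounds $\beta_0,\gamma_0$ are symmetric in the roles of the two arguments — and the correct treatment of the restriction to $\Omega_0$ in the tracking term, i.e.\ interpreting $(y-z,\cdot)_{L^2(\Omega_0)}$ as a bounded linear functional on $Y$ via the continuous embeddings $Y\hookrightarrow H^1(\Omega)\hookrightarrow L^2(\Omega_0)$. Beyond these routine checks, strict convexity delivers uniqueness, convexity delivers sufficiency, and the adjoint trick delivers the gradient representation, so the proof reduces to assembling these standard pieces.
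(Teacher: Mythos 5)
Your proposal is correct and is essentially the standard argument the paper itself defers to by citing \cite[Chapter~3]{hinze2008optimization}: eliminate the state via the affine control-to-state map, obtain existence and uniqueness of the minimizer of the reduced functional from coercivity and strict convexity ($\alpha>0$) over the nonempty, closed, convex set $U_{ad}$, and rewrite the variational inequality $\hat J'(u)(v-u)\ge 0$ using the adjoint state, where the transposed form $a(\cdot,\cdot;\mu)$ inherits the Lax--Milgram hypotheses and the tracking term defines a bounded functional on $Y$ through \eqref{norm_eqv}. All the details you flag (well-posedness of the adjoint equation, the duality identity $(y-z,S_0v)_{L^2(\Omega_0)}=b(v,p;\mu)$, sufficiency from convexity) are handled correctly, so there is nothing to add.
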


The varying parameter $\mu$ in the state equation (\ref{wpde}) could represent physical or/and geometrical quantities, like diffusion or convection speed, or the width of the spacial domain $\Omega$. Considering the problem $(\mathbb{P})$ in the context of real-time or multi-query scenarios can be very costly when the dimension of the finite element space $Y$ is very high. In this work we adopt  the reduced basis method, see for instance \cite{haasdonk2017reduced}, to obtain a surrogate for $(\mathbb{P})$  that is relatively cheaper to solve and at the same time delivers acceptable approximation for the solution of  $(\mathbb{P})$ at a given $\mu$. To this end, we first define a reduced problem for  $(\mathbb{P})$, and establish a posterior error estimators that predict the expecting approximation error when using the reduced problem. Then, we apply a greedy procedure (see Algorithm~\ref{algo:greedy}) to improve the approximation quality of the reduced problem.

%

\section{The reduced problem and the greedy procedure}

Let $Y_N \subset Y$ be a finite dimensional subspace. We define the reduced  counterpart of the problem $(\mathbb{P})$ for a given $\mu \in  \mathcal{P}$ by
\begin{equation}\label{PN}
(\mathbb{P}_N) \quad \begin{array}{l}
\min_{(u,y_N) \in U_{ad} \times Y_N} J(u,y_N):=\frac{1}{2} \| y_N-z \|_{L^2(\Omega_0)}^2  + \frac{\alpha}{2} \| u \|_U ^2\\
\text{subject to}
\end{array}
\end{equation}
\begin{align}
a(y_N,v_N;\mu)= b(u,v_N;\mu) + f(v_N;\mu)   \quad \forall \, v_N \in Y_N.\label{Nwpde} 
\end{align}

We point out that in  $(\mathbb{P}_N)$ the controls are still sought in $U_{ad}$. In a similar way to $(\mathbb{P})$, one can show that $(\mathbb{P}_N)$ has a unique solution for a given $\mu$, and it satisfies the following  optimality conditions.
\begin{theorem}
Let $u_N \in U_{ad}$ be the solution of $(\mathbb{P}_N)$ for a given $\mu \in \mathcal{P}$. Then there exist a state $y_N \in Y_N$ and an adjoint state $p_N \in Y_N$ such that there holds 
\begin{align}
& a(y_N,v_N;\mu)=b (u_N, v_N;\mu) + f(v_N;\mu)  \quad \forall \, v_N \in Y_N, \label{os_yN}\\
& a(v_N,p_N;\mu)=(y_N-z,v_N)_{L^2(\Omega_0)} \quad \forall \, v_N \in Y_N, \label{os_pN}\\
& b(v-u_N,p_N;\mu)  + \alpha (u_N, v-u_N)_U \geq 0 \quad \forall \, v \in U_{ad}. \label{os_uN}
\end{align}
\end{theorem}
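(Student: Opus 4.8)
The plan is to mirror the proof of Theorem~\ref{thm:oc} in the finite-dimensional setting, since $(\mathbb{P}_N)$ has exactly the same structure as $(\mathbb{P})$ with $Y$ replaced by the subspace $Y_N$. First I would establish that $(\mathbb{P}_N)$ is well posed: for each fixed $u \in U_{ad}$, the reduced state equation \eqref{Nwpde} has a unique solution $y_N = y_N(u;\mu) \in Y_N$ by the Lax--Milgram lemma applied on $Y_N$, using that $a(\cdot,\cdot;\mu)$ inherits continuity (constant $\gamma_0$) and coercivity (constant $\beta_0$) on $Y_N \subset Y$, and that $v_N \mapsto b(u,v_N;\mu)+f(v_N;\mu)$ is a bounded linear functional on $Y_N$. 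The map $u \mapsto y_N(u;\mu)$ is affine and continuous from $U$ into $Y_N$, hence the reduced cost functional $j_N(u):=J(u,y_N(u;\mu))$ is, as a function of $u$ alone, continuous, strictly convex (the term $\tfrac{\alpha}{2}\|u\|_U^2$ with $\alpha>0$ dominates, while $\tfrac12\|y_N(u)-z\|_{L^2(\Omega_0)}^2$ is convex), and coercive on the nonempty closed convex set $U_{ad}$. The standard existence theorem for convex minimization on Hilbert spaces then yields a unique minimizer $u_N \in U_{ad}$.

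Next I would derive the optimality system. Define the reduced state $y_N := y_N(u_N;\mu) \in Y_N$ solving \eqref{os_yN}, and introduce the reduced adjoint state $p_N \in Y_N$ as the unique solution of \eqref{os_pN}, which again exists and is unique by Lax--Milgram on $Y_N$ (the form $(v_N,w_N)\mapsto a(v_N,w_N;\mu)$ is coercive in its first argument with the same constant $\beta_0$, and $v_N\mapsto (y_N-z,v_N)_{L^2(\Omega_0)}$ is bounded on $Y_N$). Then I would compute the directional derivative of $j_N$ at $u_N$ in a feasible direction $v-u_N$ for arbitrary $v\in U_{ad}$. Writing $\delta y_N \in Y_N$ for the sensitivity solving $a(\delta y_N,v_N;\mu)=b(v-u_N,v_N;\mu)$ for all $v_N\in Y_N$, the chain rule gives $j_N'(u_N)(v-u_N) = (y_N-z,\delta y_N)_{L^2(\Omega_0)} + \alpha(u_N,v-u_N)_U$. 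Substituting the adjoint equation \eqref{os_pN} with test function $v_N=\delta y_N$ turns the first term into $a(\delta y_N,p_N;\mu)$, and then using the sensitivity equation with test function $v_N=p_N$ converts it into $b(v-u_N,p_N;\mu)$. Hence $j_N'(u_N)(v-u_N)=b(v-u_N,p_N;\mu)+\alpha(u_N,v-u_N)_U$, and the first-order necessary condition for a minimum of a convex functional over a convex set, $j_N'(u_N)(v-u_N)\ge 0$ for all $v\in U_{ad}$, is precisely \eqref{os_uN}. Since $j_N$ is convex, this condition is also sufficient, so any triple satisfying \eqref{os_yN}--\eqref{os_uN} corresponds to the optimal control.

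The only subtlety worth care — and the closest thing to an obstacle — is the legitimacy of the adjoint trick, i.e.\ that all the test functions used ($\delta y_N$ and $p_N$) genuinely lie in $Y_N$, so that equations \eqref{os_yN}--\eqref{os_pN}, which are only posed against test functions in $Y_N$, may indeed be invoked with them. This is automatic here because the variational discretization keeps the state discrete while leaving the control undiscretized, so the sensitivity $\delta y_N$ is defined by the \emph{same} Galerkin projection as $y_N$ and thus sits in $Y_N$; no discrete--continuous mismatch arises. Everything else is a routine transcription of the infinite-dimensional argument referenced in \cite[Chapter~3]{hinze2008optimization}, so I would state it briefly, noting that the constants $\gamma_0,\beta_0,\kappa_0,\sigma_0$ are uniform in $\mu$ and hence all bounds are $\mu$-independent.
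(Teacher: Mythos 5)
Your proposal is correct and follows essentially the route the paper intends: the paper gives no separate proof, stating only that the result follows "in a similar way to $(\mathbb{P})$", i.e.\ by transferring the standard argument of Theorem~\ref{thm:oc} (cf.\ \cite[Chapter~3]{hinze2008optimization}) to the conforming subspace $Y_N$, which is exactly what you carry out via Lax--Milgram on $Y_N$, strict convexity and coercivity of the reduced cost, and the adjoint/sensitivity computation. Your remark that all test functions remain in $Y_N$ is the right point to flag, and it is handled correctly.
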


The space $Y_N$ shall be  constructed successively  using the following greedy procedure.
\begin{algorithm}[Greedy procedure]\label{algo:greedy}\mbox{ }
\\[2mm]
1. Choose $S_{\mbox{\scriptsize  train}} \subset \mathcal{P}$, $\mu^1 \in S_{\mbox{\scriptsize  train}}$ arbitrary, $\varepsilon_{\mbox{\scriptsize tol}} > 0$, and $N_{\mbox{\scriptsize max}} \in \mathbb{N}$.
\\[2mm]
2. Set $N=1$, $\Phi_1:=\{y(\mu^1),p(\mu^1)\}$, and $Y_1:=\mbox{span}(\Phi_1)$.
\\[2mm]
3. \textbf{while} $\max_{ \mu \in S_{\mbox{\scriptsize train}}}\Delta(Y_N,\mu) > \varepsilon_{\mbox{\scriptsize tol}}$ and $N \leq N_{\mbox{\scriptsize max}}$ \textbf{do}
\\[2mm]
4. \mbox{ \qquad } $\mu^{N+1}:= \mbox{arg max}_{ \mu \in S_{\mbox{\scriptsize train}}}\Delta(Y_N,\mu)$
\\[2mm]
5. \mbox{ \qquad } $\Phi_{N+1}:= \Phi_N \cup \{y(\mu^{N+1}),p(\mu^{N+1})\}$
\\[2mm]
6. \mbox{ \qquad } $Y_{N+1}:= \mbox{span}(\Phi_{N+1})$
\\[2mm]
7. \mbox{ \qquad } $N \leftarrow N+1$
\\[2mm]
8. \textbf{end while}
\end{algorithm}
Here $S_{\mbox{\scriptsize  train}} \subset \mathcal{P}$ is a finite subset, called a training set, which  assumed to be rich enough in parameters to well represent $\mathcal{P}$. $N_{\mbox{\scriptsize max}}$ is the maximum number of iterations, and  $\varepsilon_{\mbox{\scriptsize tol}}$ is a given error tolerance.
In the iteration of index $N$, the pair $\{y(\mu^N),p(\mu^N)\}$ is the optimal state and adjoint state, respectively, corresponding to the problem $(\mathbb{P})$ at  $\mu^N$, and $\Phi_N$ is the reduced basis which  assumed to be orthonormal. If it is not, one can apply an orthonormalization process like Gram--Schmidt. An orthonormal reduced basis guarantees algebraic stability when $N$ increases, see \cite{haasdonk2017reduced}. The quantity $\Delta(Y_N,\mu)$ is an estimator for the expected error in approximating the solution of $(\mathbb{P})$  by the one of $(\mathbb{P}_N)$ for a given $\mu$ when using the space $Y_N$. The maximum of $\Delta(Y_N,\mu)$ over $S_{\mbox{\scriptsize train}}$ is obtained by linear search. 

We note that at line 5 in the previous algorithm one should implement a condition testing if the dimension of  $\Phi_{N+1}$ differs from the one of $\Phi_N$. If it does not, the while loop should be terminated. 

One choice for $\Delta(Y_N,\mu)$ could be 
\begin{align*}
\Delta(Y_N,\mu):=\|u(\mu)-u_N(\mu)\|_U, 
\end{align*}
i.e. the error between the solution of $(\mathbb{P})$ and of $(\mathbb{P}_N)$. However, considering this choice in a linear search process over a very large training set $S_{\mbox{\scriptsize  train}}$  is computationally too costly, since the solution of the highly dimensional problem $(\mathbb{P})$ is needed.  In the next section we establish a choice for $\Delta(Y_N,\mu)$ that does not depend on the solution of $(\mathbb{P})$.


\section{A posteriori error analysis}
We start by associating to the solution  $(u_N,y_N,p_N)$ of (\ref{os_yN})--(\ref{os_uN}) at a given $\mu \in \mathcal{P}$ the function  $\tilde y \in Y$ that satisfies
\begin{align}
a(\tilde y,v;\mu)=b (u_N, v;\mu) + f(v;\mu)  \quad \forall \, v \in Y, \label{y_aux}
\end{align}
and the function $\tilde p \in Y$ such that 
\begin{align}
a(v,\tilde p;\mu)=(y_N-z,v)_{L^2(\Omega_0)} \quad \forall \, v \in Y. \label{p_aux}
\end{align}
Furthermore, we introduce the linear form $r_y(\cdot;\mu) \in Y^\ast$ defined by 
\begin{align*}
r_y(v;\mu):=b (u_N, v;\mu) + f(v;\mu) - a(y_N,v;\mu) \quad \forall \, v \in Y,
\end{align*}
and  $r_p(\cdot;\mu) \in Y^\ast$ by 
\begin{align*}
r_p(v;\mu):=(y_N-z,v)_{L^2(\Omega_0)}-a(v,p_N;\mu)  \quad \forall \, v \in Y. 
\end{align*}

We provide some estimates for $\tilde y$ and $\tilde p$ that will be utilized in the upcoming analysis. 

\begin{lemma}
Suppose that $(u,y,p)$ is the solution of (\ref{os_y})--(\ref{os_u}), and  $(u_N,y_N,p_N)$  the solution of (\ref{os_yN})--(\ref{os_uN}). Let $\tilde y$, $\tilde p$ be as defined in (\ref{y_aux}), (\ref{p_aux}), respectively. Then there holds
\begin{align}
 \|y-\tilde y\|_Y & \leq \frac{\kappa(\mu)}{\beta(\mu)}  \|u-u_N\|_U,  \label{y_yaux}\\
 \|p-\tilde p\|_Y & \leq \frac{1}{\rho_1^2 \beta(\mu)}\|y-y_N\|_{Y},  \label{p_paux} \\
 \frac{1}{\gamma(\mu)}  \|r_y(\cdot;\mu) \|_{Y^\ast}  & \leq  \|\tilde y-y_N\|_Y  \leq  \frac{1}{\beta(\mu)}  \|r_y(\cdot;\mu) \|_{Y^\ast} \label{yN_yaux}\\
 \frac{1}{\gamma(\mu)}  \|r_p(\cdot;\mu) \|_{Y^\ast}  & \leq  \|\tilde p-p_N\|_Y  \leq  \frac{1}{\beta(\mu)}  \|r_p(\cdot;\mu) \|_{Y^\ast} \label{pN_paux}
\end{align}
\end{lemma}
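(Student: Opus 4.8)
The plan is to derive all four estimates by the same elementary device: subtract the relevant variational identities, test the resulting error equation with a suitable element of $Y$, and invoke coercivity together with continuity (or Cauchy--Schwarz). The useful starting observation is that the two residual forms are themselves represented through $a(\cdot,\cdot;\mu)$: since $\tilde y$ solves \eqref{y_aux},
\begin{align*}
r_y(v;\mu) = a(\tilde y,v;\mu) - a(y_N,v;\mu) = a(\tilde y - y_N, v;\mu) \quad \forall\, v \in Y,
\end{align*}
and since $\tilde p$ solves \eqref{p_aux},
\begin{align*}
r_p(v;\mu) = a(v,\tilde p;\mu) - a(v,p_N;\mu) = a(v, \tilde p - p_N;\mu) \quad \forall\, v \in Y.
\end{align*}

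For \eqref{y_yaux} I subtract \eqref{y_aux} from \eqref{os_y} to get $a(y-\tilde y, v;\mu) = b(u - u_N, v;\mu)$ for all $v\in Y$; choosing $v = y - \tilde y$, coercivity gives $\beta(\mu)\|y-\tilde y\|_Y^2 \le b(u-u_N, y-\tilde y;\mu) \le \kappa(\mu)\|u-u_N\|_U\|y-\tilde y\|_Y$, and dividing yields the claim. For \eqref{p_paux} I subtract \eqref{p_aux} from \eqref{os_p} to get $a(v, p-\tilde p;\mu) = (y - y_N, v)_{L^2(\Omega_0)}$ for all $v\in Y$; testing with $v = p-\tilde p$ and using coercivity,
\begin{align*}
\beta(\mu)\|p-\tilde p\|_Y^2 \le (y-y_N, p-\tilde p)_{L^2(\Omega_0)} \le \|y-y_N\|_{L^2(\Omega_0)}\,\|p-\tilde p\|_{L^2(\Omega_0)}.
\end{align*}
Here I use $\Omega_0 \subseteq \Omega$ together with the trivial embedding $\|w\|_{L^2(\Omega)} \le \|w\|_{H^1(\Omega)}$ and the left inequality in \eqref{norm_eqv} to obtain $\|w\|_{L^2(\Omega_0)} \le \rho_1^{-1}\|w\|_Y$ for $w \in Y$; applying this to $w = y-y_N$ and to $w = p-\tilde p$ and then dividing by $\|p-\tilde p\|_Y$ gives \eqref{p_paux}.

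For the two-sided bounds \eqref{yN_yaux} and \eqref{pN_paux} I use the residual identities above. For the upper bound in \eqref{yN_yaux}, take $v = \tilde y - y_N$: coercivity and the definition of $\|\cdot\|_{Y^\ast}$ give $\beta(\mu)\|\tilde y - y_N\|_Y^2 \le a(\tilde y - y_N, \tilde y - y_N;\mu) = r_y(\tilde y - y_N;\mu) \le \|r_y(\cdot;\mu)\|_{Y^\ast}\|\tilde y - y_N\|_Y$. For the lower bound, continuity of $a$ gives $|r_y(v;\mu)| = |a(\tilde y - y_N, v;\mu)| \le \gamma(\mu)\|\tilde y - y_N\|_Y\|v\|_Y$, so taking the supremum over $\|v\|_Y = 1$ yields $\|r_y(\cdot;\mu)\|_{Y^\ast} \le \gamma(\mu)\|\tilde y - y_N\|_Y$. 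The argument for \eqref{pN_paux} is word for word the same, testing with $v = \tilde p - p_N$ and using $r_p(v;\mu) = a(v, \tilde p - p_N;\mu)$.

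No step presents a genuine obstacle; this is a collection of C\'ea-/energy-type estimates, and the "hard part" is only bookkeeping. The two points worth a line of care are the embedding chain $L^2(\Omega_0) \hookrightarrow L^2(\Omega) \hookrightarrow H^1(\Omega)$ combined with \eqref{norm_eqv} used in \eqref{p_paux}, and the remark that the seemingly one-sided definition of $\|\cdot\|_{Y^\ast}$ agrees with the usual $\sup_{\|v\|_Y=1}|l(v;\mu)|$ for linear $l$ (since $v\mapsto -v$ is admissible), which is exactly what makes the lower bounds valid.
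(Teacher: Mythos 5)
Your proof is correct and follows essentially the same route as the paper: subtract the variational identities, test with the error, and use coercivity/continuity, with the residuals rewritten as $r_y(v;\mu)=a(\tilde y-y_N,v;\mu)$ and $r_p(v;\mu)=a(v,\tilde p-p_N;\mu)$. The only cosmetic difference is that for the lower bounds in \eqref{yN_yaux}--\eqref{pN_paux} you bound the dual norm directly by taking the supremum over $\|v\|_Y=1$, whereas the paper evaluates the residual at its Riesz representative $v=Rr_y(\cdot;\mu)$ (resp. $Rr_p(\cdot;\mu)$); both yield the same estimate.
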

\begin{proof}
The proof is divided into four parts for clarity. In part (III) and (IV) of the proof we shall apply the estimating techniques  from \cite{haasdonk2017reduced} for linear elliptic PDEs. 

\textit{(I) Estimating $\|y-\tilde y\|_Y$:}
Using the coercivity of $a$, the continuity  of $b$,  together with (\ref{os_y}) and (\ref{y_aux}) gives
\begin{align*}
\beta(\mu) \|y-\tilde y\|_Y^2 &\leq a(y-\tilde y, y-\tilde y ;\mu) \\
&=b(u-u_N, y-\tilde y ;\mu) \\
& \leq \kappa(\mu) \|u-u_N\|_U \|y-\tilde y\|_Y, 
\end{align*}
from which (\ref{y_yaux}) follows after dividing both sides by $\beta(\mu) \|y-\tilde y\|_Y$.

\textit{(II) Estimating $\|p-\tilde p\|_Y$:} Similarly, but this time with (\ref{os_p}) and (\ref{p_aux}) we have 
\begin{align*}
\beta(\mu) \|p-\tilde p\|_Y^2 &\leq a(p-\tilde p, p-\tilde p ;\mu) \\
&=(y-y_N, p-\tilde p)_{L^2(\Omega_0)} \\
& \leq \|y-y_N\|_{H^1(\Omega)} \|p-\tilde p\|_{H^1(\Omega)} \\
& \leq \frac{1}{\rho_1^2}\|y-y_N\|_{Y} \|p-\tilde p\|_{Y}, 
\end{align*}
where we used (\ref{norm_eqv}). Dividing both sides by $\beta(\mu) \|p-\tilde p\|_Y$ gives (\ref{p_paux}).

\textit{(III) Estimating $\|\tilde y-y_N\|_Y$:} From the coercivity of $a$ and the definition of $r_y$, we have
\begin{align*}
\beta(\mu) \|\tilde y-y_N\|_Y^2 &\leq a(\tilde y-y_N, \tilde y-y_N ;\mu) \\
& = a(\tilde y, \tilde y-y_N ;\mu)- a(y_N, \tilde y-y_N ;\mu) \\
& = b (u_N, \tilde y-y_N;\mu) + f(\tilde y-y_N;\mu) - a(y_N, \tilde y-y_N ;\mu) \\
& = r_y(\tilde y-y_N;\mu) \\
& \leq \|r_y(\cdot;\mu) \|_{Y^\ast} \|\tilde y-y_N \|_{Y}, 
\end{align*}
which gives the upper bound in (\ref{yN_yaux}) after dividing both sides  by $\beta(\mu) \|\tilde y-y_N\|_Y$. 
On the other hand, let $v:= R r_y(\cdot;\mu)$ be the Riesz representative of $r_y(\cdot;\mu)$. Then using the continuity of $a$ it follows that
\begin{align*}
\|r_y(\cdot;\mu) \|_{Y^\ast}^2 &=\|v \|_{Y}^2=(v,v)_Y=r_y(v;\mu)=a(\tilde y-y_N, v ;\mu) \\
& \leq \gamma(\mu) \|\tilde y-y_N \|_{Y} \|v \|_{Y} \\
& =  \gamma(\mu) \|\tilde y-y_N \|_{Y} \|r_y(\cdot;\mu) \|_{Y^\ast}.
\end{align*}
Dividing both sides of the previous inequality by $\gamma(\mu) \|r_y(\cdot;\mu) \|_{Y^\ast}$ yields the lower bound in (\ref{yN_yaux}).

\textit{(IV) Estimating $\|\tilde p-p_N\|_Y$:} From the coercivity of $a$ and the definition of $r_p$ we have
\begin{align*}
\beta(\mu) \|\tilde p-p_N\|_Y^2 &\leq a(\tilde p-p_N, \tilde p-p_N ;\mu) \\
& =  a(\tilde p-p_N, \tilde p;\mu)- a(\tilde p-p_N, p_N ;\mu)  \\
& = (y_N-z,\tilde p-p_N)_{L^2(\Omega_0)} - a(\tilde p-p_N, p_N ;\mu)\\
& = r_p(\tilde p-p_N;\mu) \\
& \leq \|r_p(\cdot;\mu) \|_{Y^\ast} \|\tilde p-p_N \|_{Y}, 
\end{align*}
from which we deduce the upper bound in (\ref{pN_paux}) after dividing both sides by $\beta(\mu) \|\tilde p-p_N\|_Y$. On the other hand, let $v:= R r_p(\cdot;\mu)$ be  the Riesz representative of $r_p(\cdot;\mu)$. Then using the continuity  of $a$ we get 
\begin{align*}
\|r_p(\cdot;\mu) \|_{Y^\ast}^2 &=\|v \|_{Y}^2=(v,v)_Y=r_p(v;\mu)=a(v, \tilde p - p_N ;\mu) \\
& \leq \gamma(\mu) \|\tilde p - p_N \|_{Y} \|v \|_{Y} \\
& =  \gamma(\mu) \|\tilde p - p_N \|_{Y} \|r_p(\cdot;\mu) \|_{Y^\ast},
\end{align*}
Dividing both sides of the previous inequality by $\gamma(\mu)  \|r_p(\cdot;\mu) \|_{Y^\ast}$ gives the lower bound in (\ref{pN_paux}). This completes the proof.
\end{proof}

We now state our main result. It provides a posteriori estimator for the error in  approximating the solution of $(\mathbb{P})$ by the one of   $(\mathbb{P}_N)$. The estimator is sharp up to a constant. 

\begin{theorem}
\label{thm:uyp}
Suppose that $(u,y,p)$ is the solution of (\ref{os_y})--(\ref{os_u}), and  $(u_N,y_N,p_N)$ the solution of (\ref{os_yN})--(\ref{os_uN}).  Then there holds
\begin{align*}
\delta_{uyp}(\mu)\leq 
\|u-u_N\|_U + \|y-y_N\|_{Y} + \|p-p_N\|_{Y} 
\leq \Delta_{uyp}(\mu), 
\end{align*}
where 
\begin{align*}
\Delta_{uyp}(\mu):= & c_1(\mu) \|r_y(\cdot;\mu) \|_{Y^\ast} + c_2(\mu) \|r_p(\cdot;\mu) \|_{Y^\ast}, \\
\delta_{uyp}(\mu):= & c_3(\mu)  \|r_y(\cdot;\mu) \|_{Y^\ast} + c_4(\mu) \|r_p(\cdot;\mu) \|_{Y^\ast}, \\
c_1(\mu):=&\frac{1}{\beta(\mu)} \Big[\frac{1}{\rho_1 \sqrt{\alpha}} + \Big(1 +  \frac{1}{\rho_1^2 \beta(\mu)}\Big) \Big(\frac{ \kappa(\mu)}{\beta(\mu) \rho_1 \sqrt{\alpha}}+1 \Big) \Big],\\
c_2(\mu):=&\frac{1}{\beta(\mu)} \Big(\frac{\kappa(\mu)}{\alpha}  + \frac{\kappa(\mu)^2}{\beta(\mu) \alpha} + \frac{\kappa(\mu)^2}{\rho_1^2 \beta^2(\mu) \alpha}+1 \Big),\\
c_3(\mu):=&\frac{1}{2 \gamma(\mu)}\max \Big( \frac{\kappa(\mu) }{\beta(\mu)}, 1 \Big)^{-1}, \\
c_4(\mu):=&\frac{1}{2 \gamma(\mu)}\max \Big( \frac{1}{\rho_1^2 \beta(\mu)}, 1 \Big)^{-1}.
\end{align*}
\end{theorem}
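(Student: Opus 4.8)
The plan is to prove the two inequalities separately, in both cases leveraging the auxiliary states $\tilde y,\tilde p$ of \eqref{y_aux}--\eqref{p_aux} and the four a posteriori bounds \eqref{y_yaux}--\eqref{pN_paux} from the preceding lemma. The whole argument hinges on first producing an estimate for $\|u-u_N\|_U$ in terms of $\|r_y(\cdot;\mu)\|_{Y^\ast}$ and $\|r_p(\cdot;\mu)\|_{Y^\ast}$ only; once that is in hand, $\|y-y_N\|_Y$ and $\|p-p_N\|_Y$ follow by the triangle inequality through $\tilde y$ and $\tilde p$. To get the control estimate I would start, as in adaptive FEM for optimal control, from the two variational inequalities \eqref{os_u} and \eqref{os_uN}: testing \eqref{os_u} with $v=u_N$ and \eqref{os_uN} with $v=u$ and adding, the quadratic control terms combine to $-\alpha\|u-u_N\|_U^2$ and the $b$-terms to $b(u_N-u,p-p_N;\mu)$, giving
\[
\alpha\|u-u_N\|_U^2 \le b(u_N-u,p-p_N;\mu).
\]

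Next I would split $p-p_N=(p-\tilde p)+(\tilde p-p_N)$. The $(\tilde p-p_N)$-part is harmless: by continuity of $b$ and the upper bound in \eqref{pN_paux} it is at most $\tfrac{\kappa(\mu)}{\beta(\mu)}\|u-u_N\|_U\|r_p(\cdot;\mu)\|_{Y^\ast}$. For the $(p-\tilde p)$-part the key observation is that subtracting \eqref{y_aux} from \eqref{os_y} gives $a(y-\tilde y,v;\mu)=b(u-u_N,v;\mu)$ for all $v\in Y$, while subtracting \eqref{p_aux} from \eqref{os_p} gives $a(v,p-\tilde p;\mu)=(y-y_N,v)_{L^2(\Omega_0)}$ for all $v\in Y$; choosing $v=p-\tilde p$ in the first and $v=y-\tilde y$ in the second yields the energy identity
\[
b(u_N-u,p-\tilde p;\mu) = -a(y-\tilde y,p-\tilde p;\mu) = -(y-y_N,\,y-\tilde y)_{L^2(\Omega_0)}.
\]
Writing $y-y_N=(y-\tilde y)+(\tilde y-y_N)$ exposes the genuinely negative term $-\|y-\tilde y\|_{L^2(\Omega_0)}^2$, and with Cauchy--Schwarz together with $-t^2+st\le\tfrac14 s^2$ one bounds this contribution by $\tfrac14\|\tilde y-y_N\|_{L^2(\Omega_0)}^2\le \tfrac1{\rho_1^2}\|\tilde y-y_N\|_Y^2\le\tfrac{1}{\rho_1^2\beta(\mu)^2}\|r_y(\cdot;\mu)\|_{Y^\ast}^2$ using \eqref{norm_eqv} and the upper bound in \eqref{yN_yaux}. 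Inserting the two contributions leaves a quadratic inequality for $\|u-u_N\|_U$, and solving it (using $\sqrt{a+b}\le\sqrt a+\sqrt b$) produces a bound of the form $\|u-u_N\|_U\le C_y\|r_y(\cdot;\mu)\|_{Y^\ast}+C_p\|r_p(\cdot;\mu)\|_{Y^\ast}$ with $C_y\sim\tfrac1{\rho_1\beta(\mu)\sqrt\alpha}$ and $C_p\sim\tfrac{\kappa(\mu)}{\beta(\mu)\alpha}$.

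For the upper bound I would then use $\|y-y_N\|_Y\le\|y-\tilde y\|_Y+\|\tilde y-y_N\|_Y\le\tfrac{\kappa(\mu)}{\beta(\mu)}\|u-u_N\|_U+\tfrac1{\beta(\mu)}\|r_y(\cdot;\mu)\|_{Y^\ast}$ from \eqref{y_yaux} and \eqref{yN_yaux}, and $\|p-p_N\|_Y\le\tfrac1{\rho_1^2\beta(\mu)}\|y-y_N\|_Y+\tfrac1{\beta(\mu)}\|r_p(\cdot;\mu)\|_{Y^\ast}$ from \eqref{p_paux} and \eqref{pN_paux}; substituting the control estimate and summing the three bounds, the coefficients of $\|r_y(\cdot;\mu)\|_{Y^\ast}$ and $\|r_p(\cdot;\mu)\|_{Y^\ast}$ collapse precisely to $c_1(\mu)$ and $c_2(\mu)$. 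For the lower bound I would run the triangle inequality the other way: $\|\tilde y-y_N\|_Y\le\|y-\tilde y\|_Y+\|y-y_N\|_Y\le\max\!\big(\tfrac{\kappa(\mu)}{\beta(\mu)},1\big)\big(\|u-u_N\|_U+\|y-y_N\|_Y\big)$, so by the lower bound in \eqref{yN_yaux}, $\|r_y(\cdot;\mu)\|_{Y^\ast}\le\gamma(\mu)\max\!\big(\tfrac{\kappa(\mu)}{\beta(\mu)},1\big)\,\big(\|u-u_N\|_U+\|y-y_N\|_Y+\|p-p_N\|_Y\big)$; analogously, via \eqref{p_paux} and the lower bound in \eqref{pN_paux}, $\|r_p(\cdot;\mu)\|_{Y^\ast}\le\gamma(\mu)\max\!\big(\tfrac1{\rho_1^2\beta(\mu)},1\big)\big(\|y-y_N\|_Y+\|p-p_N\|_Y\big)$. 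Adding these two inequalities and dividing by two gives exactly $\delta_{uyp}(\mu)\le\|u-u_N\|_U+\|y-y_N\|_Y+\|p-p_N\|_Y$ with $c_3(\mu),c_4(\mu)$ as stated.

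I expect the main obstacle to be Step on the control estimate. A naive bound $b(u_N-u,p-p_N;\mu)\le\kappa(\mu)\|u-u_N\|_U\|p-p_N\|_Y$ followed by the triangle inequalities for $\|p-p_N\|_Y$ and $\|y-y_N\|_Y$ reintroduces a term proportional to $\tfrac{\kappa(\mu)^2}{\rho_1^2\beta(\mu)^2}\|u-u_N\|_U$ on the right, which can only be absorbed if $\alpha>\kappa(\mu)^2/(\rho_1^2\beta(\mu)^2)$ — a restriction that must be avoided. The energy identity for $b(u_N-u,p-\tilde p;\mu)$ and the negative term $-\|y-\tilde y\|_{L^2(\Omega_0)}^2$ it supplies are exactly what closes the estimate unconditionally, at the cost of the $\sqrt\alpha$ in $c_1(\mu)$; everything after that is careful bookkeeping of constants.
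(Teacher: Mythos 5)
Your proposal is correct and follows essentially the same route as the paper's proof: the same variational-inequality start $\alpha\|u-u_N\|_U^2\le b(u_N-u,p-p_N;\mu)$, the same splitting through $\tilde p$ with the energy identity $b(u_N-u,p-\tilde p;\mu)=(y-y_N,\tilde y-y)_{L^2(\Omega_0)}$, the same triangle-inequality assembly via \eqref{y_yaux}--\eqref{pN_paux}, and the identical lower-bound argument with the $\max$ factors. The only differences are bookkeeping (you drop $-\|y-\tilde y\|_{L^2}^2$ and solve a quadratic inequality in $\|u-u_N\|_U$, whereas the paper drops $-\|y-y_N\|_{L^2}^2$ and absorbs $\tfrac{\alpha}{2}\|u-u_N\|_U^2$ by Young's inequality), and both yield the stated constants.
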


\begin{proof}
The proof falls  into two parts, and we shall follow the ideas of \cite[Theorem~ 3.2]{gong2017adaptive} for adaptive finite element method for elliptic control problems. 

\textit{(I) Establishing an upper bound for $\|u-u_N\|_U + \|y-y_N\|_{Y} + \|p-p_N\|_{Y}$:}
Taking $v:=u_N$ in (\ref{os_u}), and $v:=u$ in (\ref{os_uN}), and adding the resulting inequalities, we get
\begin{align}
\alpha \|u-u_N\|_U^2 &\leq b(u_N-u,p-p_N;\mu)  \nonumber\\
& = b(u_N-u,p-\tilde p;\mu)+ b(u_N-u,\tilde p - p_N;\mu) \nonumber \\
& =: S_1 + S_2. \label{s1s2}
\end{align}
Recalling (\ref{norm_eqv}), an upper bound for $S_1$ can be obtained as follows. 
\begin{align*}
S_1 &=b(u_N-u,p-\tilde p;\mu) =a(\tilde y-y,p-\tilde p;\mu) =(y- y_N,\tilde y-y)_{L^2(\Omega_0)} \\
&=(y- y_N,\tilde y-y_N)_{L^2(\Omega_0)} -\|y- y_N\|^2_{L^2(\Omega_0)} \\
&\leq \frac{1}{2}\|\tilde y- y_N\|^2_{L^2(\Omega_0)} \leq  \frac{1}{2}\|\tilde y- y_N\|^2_{H^1(\Omega)}  \leq  \frac{1}{2 \rho^2_1}\|\tilde y- y_N\|^2_{Y}, 
\end{align*}
On the other hand, for $S_2$ we have
\begin{align*}
S_2 &=b(u_N-u,\tilde p-p_N;\mu) \\
& \leq   \frac{\alpha}{2}\|u_N-u\|_U^2 + \frac{1}{2 \alpha} \kappa(\mu)^2 \|\tilde p- p_N\|^2_{Y}.  
\end{align*}
Using the bounds of $S_1$ and $S_2$ in (\ref{s1s2}) yields
\begin{align}
\|u-u_N\|_U \leq \frac{1}{\rho_1 \sqrt{\alpha} } \|\tilde y- y_N\|_{Y} + \frac{1}{\alpha} \kappa(\mu) \|\tilde p- p_N\|_{Y}. \label{uuN}
\end{align}
Applying the triangle inequality, (\ref{y_yaux}), together with (\ref{uuN}) results in
\begin{align}
\|y-y_N\|_{Y} & \leq \|y-\tilde y\|_{Y} + \|\tilde y-y_N\|_{Y} \nonumber\\
& \leq \frac{\kappa(\mu)}{\beta(\mu)}  \|u-u_N\|_U + \|\tilde y-y_N\|_{Y} \nonumber\\
& \leq \Big(\frac{ \kappa(\mu)}{\beta(\mu) \rho_1 \sqrt{\alpha}}+1 \Big)\|\tilde y-y_N\|_{Y} +  \frac{\kappa(\mu)^2}{\beta(\mu) \alpha}  \|\tilde p- p_N\|_{Y}. \label{yyN}
\end{align}
Again the triangle inequality, (\ref{p_paux}), and (\ref{yyN}) yields to
\begin{align}
\|p-p_N\|_{Y} & \leq \|p-\tilde p\|_{Y} + \|\tilde p-p_N\|_{Y} \nonumber\\
& \leq \frac{1}{\rho_1^2 \beta(\mu) }  \|y-y_N\|_Y + \|\tilde p-p_N\|_{Y} \nonumber\\
& \leq \frac{1}{\rho_1^2 \beta(\mu) } \Big(\frac{ \kappa(\mu)}{\beta(\mu) \rho_1 \sqrt{\alpha}}+1 \Big)\|\tilde y-y_N\|_{Y} \nonumber \\
& \quad  +  \Big( \frac{\kappa(\mu)^2}{\rho_1^2 \beta^2(\mu) \alpha}+1 \Big)  \|\tilde p- p_N\|_{Y}. \label{ppN} 
\end{align}
Combining (\ref{uuN}), (\ref{yyN}), (\ref{ppN}), and recalling (\ref{yN_yaux}), (\ref{pN_paux}), we get
\begin{align*}
\|u-u_N\|_U + \|y-y_N\|_{Y} + \|p-p_N\|_{Y} \leq c_1(\mu) \|r_y(\cdot;\mu) \|_{Y^\ast} + c_2(\mu) \|r_p(\cdot;\mu) \|_{Y^\ast}, 
\end{align*}
where 
\begin{align*}
c_1(\mu):=&\frac{1}{\beta(\mu)} \Big[\frac{1}{\rho_1 \sqrt{\alpha}} + \Big(1 +  \frac{1}{\rho_1^2 \beta(\mu)}\Big) \Big(\frac{ \kappa(\mu)}{\beta(\mu) \rho_1 \sqrt{\alpha}}+1 \Big) \Big],\\
c_2(\mu):=&\frac{1}{\beta(\mu)} \Big(\frac{\kappa(\mu)}{\alpha}  + \frac{\kappa(\mu)^2}{\beta(\mu) \alpha} + \frac{\kappa(\mu)^2}{\rho_1^2 \beta^2(\mu) \alpha}+1 \Big).
\end{align*}

\textit{(II) Establishing a lower bound for $\|u-u_N\|_U + \|y-y_N\|_{Y} + \|p-p_N\|_{Y}$:}
From  (\ref{yN_yaux}), the triangle inequality, and  (\ref{y_yaux}) we have
\begin{align}
\frac{1}{\gamma(\mu)}  \|r_y(\cdot;\mu) \|_{Y^\ast} &\leq \|\tilde y- y_N\|_{Y} \leq  \|\tilde y- y\|_{Y}+ \| y- y_N\|_{Y} \nonumber\\
& \leq \frac{\kappa(\mu) }{\beta(\mu)} \|u-u_N\|_U + \|y-y_N\|_{Y} \nonumber \\
& \leq \max \Big( \frac{\kappa(\mu) }{\beta(\mu)}, 1 \Big) \Big(  \|u-u_N\|_U + \|y-y_N\|_{Y}  \Big). \label{r_y}
\end{align}
Similarly, but this time with (\ref{pN_paux}) and  (\ref{p_paux}) we get
\begin{align}
\frac{1}{\gamma(\mu)}  \|r_p(\cdot;\mu) \|_{Y^\ast} &\leq \|\tilde p- p_N\|_{Y} \leq \|\tilde p- p\|_{Y}+ \| p- p_N\|_{Y} \nonumber\\
&\leq \frac{1}{\rho_1^2 \beta(\mu)}\|y-y_N\|_{Y} + \| p- p_N\|_{Y} \nonumber\\
&\leq \max \Big( \frac{1}{\rho_1^2 \beta(\mu)}, 1 \Big) \Big( \|y-y_N\|_{Y} + \| p- p_N\|_{Y}\Big). \label{r_p}
\end{align}
From (\ref{r_y}) and (\ref{r_p}) one can  easily deduce that 
\begin{align*}
 c_3(\mu)  \|r_y(\cdot;\mu) \|_{Y^\ast} + c_4(\mu) \|r_p(\cdot;\mu) \|_{Y^\ast}  \leq 
\|u-u_N\|_U + \|y-y_N\|_{Y} + \|p-p_N\|_{Y}, 
\end{align*}
where 
\begin{align*}
c_3(\mu):=&\frac{1}{2 \gamma(\mu)}\max \Big( \frac{\kappa(\mu) }{\beta(\mu)}, 1 \Big)^{-1}, \\
c_4(\mu):=&\frac{1}{2 \gamma(\mu)}\max \Big( \frac{1}{\rho_1^2 \beta(\mu)}, 1 \Big)^{-1}.
\end{align*}
This concludes the proof. 
\end{proof}

Next, we establish a posteriori estimator for the relative error of the controls. 

\begin{corollary}
\label{thm:u}
Under the hypothesis of Theorem~\ref{thm:uyp}, there holds
\begin{align}
\frac{\|u-u_N\|_U}{\|u\|_U } \leq \frac{2 \Delta_u(\mu)}{\|u_N\|_U } \label{rel_err}
\end{align}
provided that $\frac{2\Delta_u(\mu)}{\|u_N\|_U} \leq 1$, where 
\begin{align*}
\Delta_u(\mu):=\frac{1}{\rho_1 \sqrt{\alpha} \beta(\mu)}  \|r_y(\cdot;\mu) \|_{Y^\ast}  + \frac{ \kappa(\mu) }{\alpha  \beta(\mu)}  \|r_p(\cdot;\mu) \|_{Y^\ast}. 
\end{align*}
\end{corollary}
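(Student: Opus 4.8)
The plan is to extract the estimate $\|u-u_N\|_U \le \Delta_u(\mu)$ directly from the workings of the proof of Theorem~\ref{thm:uyp}, and then convert it into a bound on the relative error by a reverse triangle inequality argument that exploits the smallness hypothesis.

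First I would recall inequality (\ref{uuN}) established in part (I) of the proof of Theorem~\ref{thm:uyp}, namely
\[
\|u-u_N\|_U \le \frac{1}{\rho_1 \sqrt{\alpha}}\,\|\tilde y - y_N\|_{Y} + \frac{\kappa(\mu)}{\alpha}\,\|\tilde p - p_N\|_{Y}.
\]
Into this I would substitute the two upper bounds from the Lemma, i.e. the right inequality in (\ref{yN_yaux}), $\|\tilde y - y_N\|_Y \le \beta(\mu)^{-1}\|r_y(\cdot;\mu)\|_{Y^\ast}$, and the right inequality in (\ref{pN_paux}), $\|\tilde p - p_N\|_Y \le \beta(\mu)^{-1}\|r_p(\cdot;\mu)\|_{Y^\ast}$. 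This yields exactly
\[
\|u-u_N\|_U \le \frac{1}{\rho_1 \sqrt{\alpha}\,\beta(\mu)}\,\|r_y(\cdot;\mu)\|_{Y^\ast} + \frac{\kappa(\mu)}{\alpha\,\beta(\mu)}\,\|r_p(\cdot;\mu)\|_{Y^\ast} = \Delta_u(\mu).
\]

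Next I would pass to the relative error. By the reverse triangle inequality, $\|u\|_U \ge \|u_N\|_U - \|u-u_N\|_U \ge \|u_N\|_U - \Delta_u(\mu)$. The hypothesis $2\Delta_u(\mu)/\|u_N\|_U \le 1$ (which in particular forces $\|u_N\|_U > 0$) gives $\Delta_u(\mu) \le \tfrac12\|u_N\|_U$, hence $\|u\|_U \ge \tfrac12\|u_N\|_U > 0$. Dividing the bound $\|u-u_N\|_U \le \Delta_u(\mu)$ by $\|u\|_U$ and using this lower bound on $\|u\|_U$ produces
\[
\frac{\|u-u_N\|_U}{\|u\|_U} \le \frac{\Delta_u(\mu)}{\tfrac12 \|u_N\|_U} = \frac{2\Delta_u(\mu)}{\|u_N\|_U},
\]
which is (\ref{rel_err}).

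There is no real obstacle here: the statement is a short consequence of an intermediate inequality already proved for Theorem~\ref{thm:uyp}. The only point requiring a little care is the legitimacy of dividing by $\|u\|_U$, which is precisely what the smallness condition $2\Delta_u(\mu)/\|u_N\|_U \le 1$ secures via the lower bound $\|u\|_U \ge \tfrac12\|u_N\|_U$; I would state this explicitly rather than leave it implicit.
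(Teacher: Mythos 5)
Your proposal is correct and follows essentially the same route as the paper: it derives $\|u-u_N\|_U \le \Delta_u(\mu)$ from (\ref{uuN}) together with the upper bounds in (\ref{yN_yaux}) and (\ref{pN_paux}), and then uses the smallness hypothesis with the (reverse) triangle inequality to get $\|u\|_U \ge \tfrac12\|u_N\|_U$ before dividing. Your explicit reverse-triangle-inequality phrasing is just a cleaner rendering of the paper's short case distinction, so there is nothing to add.
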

\begin{proof}
From the estimate (\ref{uuN}) combined with  (\ref{yN_yaux}) and (\ref{pN_paux}) we obtain 
\begin{align}
\|u-u_N\|_U \leq \Delta_u(\mu). \label{ineq1}
\end{align}
Let $\frac{2\Delta_u(\mu)}{\|u_N\|_U} \leq 1$, then we have
\begin{align*}
\big|   \|u\|_U - \|u_N\|_U\big| \leq \|u-u_N\|_U \leq \Delta_u(\mu) \leq \frac{1}{2} \|u_N\|_U.
\end{align*}
It follows from the previous inequality that if $\|u_N\|_U \geq \|u\|_U$, then 
\begin{align}
\frac{1}{2} \|u_N\|_U \leq  \|u\|_U  \label{ineq2}
\end{align}
which is clearly valid  also when $\|u_N\|_U < \|u\|_U$. Thus, from (\ref{ineq1}) and (\ref{ineq2}) the desired estimate   (\ref{rel_err}) can be deduced.  
\end{proof}

\begin{remark}
\label{error_gap}
The term $S_1$ in the proof of Theorem~\ref{thm:uyp} is over estimated by dropping the term $-\frac{1}{2}\|y- y_N\|^2_{L^2(\Omega_0)}$, consequently, so is the term (\ref{uuN}). By this, a gap of a noticeable size should be expected between the relative error of controls and its a posteriori estimator  in (\ref{rel_err}). 
\end{remark}

\begin{remark}
To consider the upper bound $\Delta_u(\mu)$  from Corollary~\ref{thm:u}  or $\Delta_{uyp}(\mu)$ from Theorem~\ref{thm:uyp} for $\Delta(Y_N,\mu)$ in Algorithm~\ref{algo:greedy},  the constants $\kappa(\mu)$ and $\beta(\mu)$ should be generally replaced by other ones, say  $\tilde{\kappa}(\mu)$ and $\tilde{\beta}(\mu)$, respectively, that are  computationally cheaper to evaluate. In particular, we assume that 
\begin{align*}
\beta(\mu) \geq \tilde{\beta}(\mu) \geq \beta_0  \quad \forall \, \mu \in \mathcal{P},\\
\kappa(\mu) \leq \tilde{\kappa}(\mu) \leq \kappa_0  \quad \forall \, \mu \in \mathcal{P}. 
\end{align*}
Such constants $\tilde{\kappa}(\mu)$ and $\tilde{\beta}(\mu)$ can be obtained using, for instance, the min-theta approach after assuming parameter-separability for the bilinear forms $a$ and $b$,  see \cite{haasdonk2017reduced} for the details. 
\end{remark}

\section{Convergence analysis}
In this section we are concerned with the question of whether the solution of the reduced control problem $(\mathbb{P}_N)$ converges to the solution of $(\mathbb{P})$ as $N \to \infty$. For this purpose, we need to investigate the continuity with respect to the parameter $\mu$ and the uniform boundedness with respect to $N$ for the quantities that appear during the analysis.

For a given $u \in U$, we introduce the mapping 
\begin{align}
S_u: \mathcal{P}\to Y  \label{Smap}
\end{align}
such that the function $y \in Y$, $y:=S_u(\mu)$,  is the solution of the variational problem (\ref{wpde}) corresponding to $u \in U$ and $\mu \in \mathcal{P}$. By Lax-Milgram's lemma, the mapping (\ref{Smap}) is well defined. 

In what follows we set
\begin{multline*}
\|a(\cdot,\cdot,\mu)-a(\cdot,\cdot,\xi)\|_A:= \sup_{\|v\|_Y=1,\|w\|_Y=1}|a(v,w,\mu)-a(v,w,\xi)|, \\ \quad \|f(\cdot,\mu)-f(\cdot,\xi)\|_F:= \sup_{\|v\|_Y=1}|f(v,\mu)-f(v,\xi)|, \text{ and}\\
\|b(\cdot,\cdot,\mu)-b(\cdot,\cdot,\xi)\|_B:= \sup_{\|v\|_U=1,\|w\|_Y=1}|b(v,w,\mu)-b(v,w,\xi)|.
\end{multline*}

\begin{lemma}
\label{lemma:S}
For a given $u \in U$, let $S_u$ be the mapping defined in (\ref{Smap}). Then the following estimates hold;

\begin{align}
\|S_u(\mu)\|_Y  \leq  c_0(\|u\|_U  +1)   \qquad \forall \,  \mu \in \mathcal{P}, \label{uniformbound} 
\end{align}
where $c_0:=\frac{1}{\beta_0}\max(\kappa_0,\sigma_0)$, and 
\begin{align}
&\|S_u(\mu_2)-S_u(\mu_1)\|_Y   \leq \frac{1}{\beta_0}\Big( c_0 \|a(\cdot,\cdot;\mu_2)-a(\cdot,\cdot;\mu_1)\|_A (\|u\|_U  + 1) \nonumber \\
& \quad + \|b(\cdot,\cdot;\mu_2)-b(\cdot,\cdot;\mu_1)\|_B \|u\|_U + \|f(\cdot;\mu_2)-f(\cdot;\mu_1)\|_F \Big), \label{Scontinuous}
\end{align}
for any $\mu_1, \mu_2 \in \mathcal{P}$.

\end{lemma}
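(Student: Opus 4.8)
The plan is to prove both estimates by the standard energy argument: test the variational problem \eqref{wpde} against the solution itself for the uniform bound, and against the difference of the two solutions for the Lipschitz-in-$\mu$ estimate, in each case using coercivity of $a$ together with the uniform continuity bounds on $a$, $b$, and $f$.

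First I would prove \eqref{uniformbound}. Write $y:=S_u(\mu)$, so that $a(y,v;\mu)=b(u,v;\mu)+f(v;\mu)$ for all $v\in Y$. Taking $v=y$ and using coercivity,
\[
\beta_0\|y\|_Y^2\le\beta(\mu)\|y\|_Y^2\le a(y,y;\mu)=b(u,y;\mu)+f(y;\mu).
\]
The continuity of $b$ gives $b(u,y;\mu)\le\kappa_0\|u\|_U\|y\|_Y$, while the uniform bound on $f$ gives $f(y;\mu)\le\sigma_0\|y\|_Y$; hence $\beta_0\|y\|_Y^2\le(\kappa_0\|u\|_U+\sigma_0)\|y\|_Y\le\max(\kappa_0,\sigma_0)(\|u\|_U+1)\|y\|_Y$. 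Dividing by $\beta_0\|y\|_Y$ (the case $y=0$ being trivial) yields \eqref{uniformbound} with $c_0=\beta_0^{-1}\max(\kappa_0,\sigma_0)$.

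For \eqref{Scontinuous} I would set $y_i:=S_u(\mu_i)$, so $a(y_i,v;\mu_i)=b(u,v;\mu_i)+f(v;\mu_i)$ for $i=1,2$, and test against $w:=y_2-y_1$ using coercivity at $\mu_2$: $\beta_0\|w\|_Y^2\le a(w,w;\mu_2)=a(y_2,w;\mu_2)-a(y_1,w;\mu_2)$. Rewriting $a(y_2,w;\mu_2)=b(u,w;\mu_2)+f(w;\mu_2)$ and adding and subtracting $a(y_1,w;\mu_1)=b(u,w;\mu_1)+f(w;\mu_1)$, one arrives at
\[
\beta_0\|w\|_Y^2\le\bigl(b(u,w;\mu_2)-b(u,w;\mu_1)\bigr)+\bigl(f(w;\mu_2)-f(w;\mu_1)\bigr)-\bigl(a(y_1,w;\mu_2)-a(y_1,w;\mu_1)\bigr).
\]
Bounding the three brackets by $\|b(\cdot,\cdot;\mu_2)-b(\cdot,\cdot;\mu_1)\|_B\|u\|_U\|w\|_Y$, by $\|f(\cdot;\mu_2)-f(\cdot;\mu_1)\|_F\|w\|_Y$, and by $\|a(\cdot,\cdot;\mu_2)-a(\cdot,\cdot;\mu_1)\|_A\|y_1\|_Y\|w\|_Y$ respectively, then invoking \eqref{uniformbound} in the form $\|y_1\|_Y\le c_0(\|u\|_U+1)$, and finally dividing by $\beta_0\|w\|_Y$, gives \eqref{Scontinuous}.

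The computation is entirely routine; the only point requiring care is the bookkeeping in the add-and-subtract step — one must split off $a(y_1,w;\mu_1)$ (rather than $a(y_1,w;\mu_2)$) so that the $y_1$-term is eliminated through its own variational equation at $\mu_1$, leaving precisely the parameter-difference of $a$ evaluated at the fixed function $y_1$ and the test function $w$. Since $\|\cdot\|_A$ is the supremum over unit-norm arguments in both slots, this term contributes the factor $\|y_1\|_Y\|w\|_Y$, which is what produces the coefficient $c_0(\|u\|_U+1)$ in front of $\|a(\cdot,\cdot;\mu_2)-a(\cdot,\cdot;\mu_1)\|_A$ in the final bound. I do not anticipate any genuine obstacle beyond this.
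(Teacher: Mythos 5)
Your proposal is correct and follows essentially the same argument as the paper: an energy estimate via uniform coercivity for \eqref{uniformbound}, and for \eqref{Scontinuous} the same add-and-subtract of the variational equations at the two parameters, bounded by the $\|\cdot\|_A$, $\|\cdot\|_B$, $\|\cdot\|_F$ differences together with \eqref{uniformbound}. The only cosmetic difference is that you invoke coercivity at $\mu_2$ so the $a$-difference carries $\|y_1\|_Y$, whereas the paper coerces at $\mu_1$ and bounds $\|y_2\|_Y$; by symmetry this changes nothing.
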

\begin{proof}
To prove (\ref{uniformbound}), we denote $y:=S_u(\mu)$.  From the coerciveness of the bilinear form $a(\cdot,\cdot; \mu)$ together with the boundedness of $b(\cdot,\cdot;\mu)$ and $f(\cdot;\mu)$, one obtains 
\begin{align*}
\beta_0 \|y\|^2_Y & \leq a(y,y;\mu) =b(u,y;\mu)+f(y;\mu) \\
  & \leq \kappa_0 \|u\|_U \|y\|_Y + \sigma_0 \|y\|_Y, \\
  & \leq \max(\kappa_0,\sigma_0) (\|u\|_U  + 1)\|y\|_Y, 
\end{align*}
which gives (\ref{uniformbound}) after dividing in  the previous inequality both sides by  $\beta_0 \|y\|_Y$.

To verify (\ref{Scontinuous}) we define $y_1:=S_u(\mu_1)$ and $y_2:=S_u(\mu_2)$. Employing the coerciveness of $a(\cdot,\cdot; \mu_1)$ and the estimate (\ref{uniformbound}), we get
\begin{align*}
\beta_0 \|y_1-y_2\|^2_Y &\leq a(y_1-y_2, y_1-y_2;\mu_1) \\
&=b(u,y_1-y_2;\mu_1)+f(y_1-y_2;\mu_1) -a(y_2, y_1-y_2;\mu_1)  \\
&=b(u,y_1-y_2;\mu_1)+f(y_1-y_2;\mu_1) -a(y_2, y_1-y_2;\mu_1) \\
& \quad + a(y_2, y_1-y_2;\mu_2)-b(u,y_1-y_2;\mu_2)-f(y_1-y_2;\mu_2) \\
& \leq \|a(\cdot,\cdot;\mu_2)-a(\cdot,\cdot;\mu_1)\|_A \|y_2\|_Y \|y_1-y_2\|_Y \\
& \quad + \|b(\cdot,\cdot;\mu_2)-b(\cdot,\cdot;\mu_1)\|_B \|u\|_U \|y_1-y_2\|_Y \\
& \quad + \|f(\cdot;\mu_2)-f(\cdot;\mu_1)\|_F  \|y_1-y_2\|_Y \\
& \leq c_0 \|a(\cdot,\cdot;\mu_2)-a(\cdot,\cdot;\mu_1)\|_A (\|u\|_U  + 1) \|y_1-y_2\|_Y \\
& \quad + \|b(\cdot,\cdot;\mu_2)-b(\cdot,\cdot;\mu_1)\|_B \|u\|_U \|y_1-y_2\|_Y \\
& \quad + \|f(\cdot;\mu_2)-f(\cdot;\mu_1)\|_F  \|y_1-y_2\|_Y, 
\end{align*}
from which one deduces (\ref{Scontinuous}) after dividing both sides of the inequality by $\beta_0 \|y_1-y_2\|_Y$.  
\end{proof}

We associate to the reduced variational problem (\ref{Nwpde}) the mapping 
\begin{align}
S_{N,u}:\mathcal{P} \to Y_N, \label{SNmap}
\end{align} 
where the function $y_N \in Y_N$, $y_N:=S_{N,u}(\mu)$, is the solution of (\ref{Nwpde}) corresponding to the given  $u \in U$ and $\mu \in \mathcal{P}$.

\begin{lemma}
For a given $u \in U$, let $S_{N,u}$ be the mapping defined in (\ref{SNmap}). Then the following estimates hold

\begin{align*}
\|S_{N,u}(\mu)\|_Y  \leq  c_0(\|u\|_U  +1)   \qquad \forall \,  \mu \in \mathcal{P}, 
\end{align*}
where $c_0:=\frac{1}{\beta_0}\max(\kappa_0,\sigma_0)$, and 
\begin{align*}
&\|S_{N,u}(\mu_2)-S_{N,u}(\mu_1)\|_Y   \leq \frac{1}{\beta_0}\Big( c_0 \|a(\cdot,\cdot;\mu_2)-a(\cdot,\cdot;\mu_1)\|_A (\|u\|_U  + 1) \nonumber \\
& \quad + \|b(\cdot,\cdot;\mu_2)-b(\cdot,\cdot;\mu_1)\|_B \|u\|_U + \|f(\cdot;\mu_2)-f(\cdot;\mu_1)\|_F \Big), 
\end{align*}
for any $\mu_1, \mu_2 \in \mathcal{P}$.

\end{lemma}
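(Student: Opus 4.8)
The plan is to mimic the proof of Lemma~\ref{lemma:S} verbatim, simply restricting all test functions to the conforming subspace $Y_N$. The one point worth isolating at the outset is that the constants $\beta_0$, $\kappa_0$, $\sigma_0$ (and hence $c_0$) remain valid on $Y_N$: since $Y_N \subseteq Y$, the coercivity inequality $a(y_N,y_N;\mu) \ge \beta(\mu)\|y_N\|_Y^2 \ge \beta_0\|y_N\|_Y^2$ holds a fortiori for every $y_N \in Y_N$, and the continuity bounds for $b(\cdot,\cdot;\mu)$ and $f(\cdot;\mu)$ carry over unchanged since they hold for all test functions in $Y$. In particular the well-posedness of (\ref{Nwpde}) follows again from Lax--Milgram, so $S_{N,u}$ is well defined.

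For the uniform bound I would set $y_N := S_{N,u}(\mu)$, test (\ref{Nwpde}) with $v_N := y_N$, and estimate
\[
\beta_0\|y_N\|_Y^2 \le a(y_N,y_N;\mu) = b(u,y_N;\mu) + f(y_N;\mu) \le \max(\kappa_0,\sigma_0)\,(\|u\|_U+1)\,\|y_N\|_Y,
\]
then divide by $\beta_0\|y_N\|_Y$; this is line-by-line identical to the first display in the proof of Lemma~\ref{lemma:S}.

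For the Lipschitz-type estimate I would put $y_{N,1}:=S_{N,u}(\mu_1)$ and $y_{N,2}:=S_{N,u}(\mu_2)$ and observe that $y_{N,1}-y_{N,2}\in Y_N$, hence it is an admissible test function in (\ref{Nwpde}) for both $\mu_1$ and $\mu_2$. Starting from
\[
\beta_0\|y_{N,1}-y_{N,2}\|_Y^2 \le a(y_{N,1}-y_{N,2},y_{N,1}-y_{N,2};\mu_1),
\]
I would use the $\mu_1$-equation for $y_{N,1}$ and then add and subtract the $\mu_2$-equation for $y_{N,2}$, both tested against $y_{N,1}-y_{N,2}\in Y_N$, exactly as in the proof of Lemma~\ref{lemma:S}. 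This produces the three difference terms controlled by $\|a(\cdot,\cdot;\mu_2)-a(\cdot,\cdot;\mu_1)\|_A$, $\|b(\cdot,\cdot;\mu_2)-b(\cdot,\cdot;\mu_1)\|_B$ and $\|f(\cdot;\mu_2)-f(\cdot;\mu_1)\|_F$; bounding $\|y_{N,2}\|_Y$ by $c_0(\|u\|_U+1)$ via the uniform bound just established and dividing by $\beta_0\|y_{N,1}-y_{N,2}\|_Y$ yields the claim.

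There is essentially no obstacle: the argument is robust under passing to a conforming subspace because coercivity and continuity are inherited, and the structural fact that the difference of two reduced solutions lies in $Y_N$ — and may therefore be used as a test function in both reduced equations — is immediate. The only pitfall to avoid is inadvertently invoking the full-space equation (\ref{wpde}), which $y_N$ does not satisfy; every use of an equation must be the reduced one (\ref{Nwpde}) with a test function drawn from $Y_N$.
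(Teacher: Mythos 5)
Your proposal is correct and is exactly what the paper intends: its proof of this lemma consists of the single remark that it proceeds along the lines of the proof of Lemma~\ref{lemma:S}, which is precisely the adaptation you carry out, with the right care taken to test only with functions in $Y_N$ and to use the reduced equation (\ref{Nwpde}) throughout. Your observation that coercivity and continuity constants are inherited on the conforming subspace $Y_N$, so that $c_0$ is unchanged and independent of $N$, is the only point needing mention, and you state it correctly.
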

\begin{proof}
A long the lines of Lemma~\ref{lemma:S}'s proof.
\end{proof}

\begin{theorem}
\label{thm:ubound}
Let $\bar u(\mu) \in U_{ad}$ be the solution of $(\mathbb{P})$ for an arbitrary  $\mu \in \mathcal{P}$. Then, there exists a constant $c>0$ independent of $\mu$ such that there holds 
\begin{align}
\|\bar u(\mu)\|_U\leq c \big( \|z\|_{L^2(\Omega_0)} + \|u\|_U +1\big)  \quad \forall \, u \in U_{ad}.\label{uniformbound2}
\end{align}
\end{theorem}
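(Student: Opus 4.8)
The plan is to exploit the optimality of $\bar u(\mu)$ by comparing the optimal cost $J(\bar u(\mu), \bar y(\mu))$ against the cost attained by a convenient fixed feasible control, the most natural choice being some fixed element $u_0 \in U_{ad}$ (for instance the one used to start the greedy procedure, or simply any admissible control; since $U_{ad}$ is nonempty such $u_0$ exists). Writing $\bar y(\mu) = S_{\bar u(\mu)}(\mu)$ for the associated state, optimality gives
\begin{align*}
\frac{\alpha}{2}\|\bar u(\mu)\|_U^2 \le J(\bar u(\mu),\bar y(\mu)) \le J(u_0, S_{u_0}(\mu)) = \tfrac{1}{2}\|S_{u_0}(\mu)-z\|_{L^2(\Omega_0)}^2 + \tfrac{\alpha}{2}\|u_0\|_U^2.
\end{align*}
So the whole estimate reduces to bounding $\|S_{u_0}(\mu)-z\|_{L^2(\Omega_0)}$ uniformly in $\mu$, and this is exactly where Lemma~\ref{lemma:S} enters.

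The key steps, in order, are: first, invoke the uniform bound \eqref{uniformbound} from Lemma~\ref{lemma:S} to get $\|S_{u_0}(\mu)\|_Y \le c_0(\|u_0\|_U + 1)$ for all $\mu$; second, pass from the $Y$-norm to the $L^2(\Omega_0)$-norm using the conforming embedding $L^2(\Omega_0)$-norm $\le \|\cdot\|_{L^2(\Omega)} \le C_\Omega\|\cdot\|_{H^1(\Omega)} \le (C_\Omega/\rho_1)\|\cdot\|_Y$ (the Poincaré/embedding constant $C_\Omega$ depends only on $\Omega$, and restriction to $\Omega_0\subseteq\Omega$ only decreases the norm); third, apply the triangle inequality $\|S_{u_0}(\mu)-z\|_{L^2(\Omega_0)} \le \|S_{u_0}(\mu)\|_{L^2(\Omega_0)} + \|z\|_{L^2(\Omega_0)}$; fourth, substitute into the displayed cost inequality, use $(a+b)^2 \le 2a^2 + 2b^2$ to split the square, divide by $\alpha/2$, and take square roots, using $\sqrt{x+y}\le\sqrt{x}+\sqrt{y}$ to arrive at a bound of the form $\|\bar u(\mu)\|_U \le c(\|z\|_{L^2(\Omega_0)} + \|u_0\|_U + 1)$. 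Since $u_0$ is one particular admissible control and the statement is claimed for all $u \in U_{ad}$ on the right-hand side, one may as well replace $\|u_0\|_U$ by $\inf_{u\in U_{ad}}\|u\|_U$, or simply note that the inequality holds with any fixed choice and hence in particular is implied by the right-hand side $\|u\|_U$ for arbitrary $u \in U_{ad}$; the constant $c$ collects $1/\sqrt{\alpha}$, $c_0$, $C_\Omega/\rho_1$, and absolute constants, and is manifestly independent of $\mu$.

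I do not expect a serious obstacle here — the argument is the standard a priori bound for optimal controls via the cost functional. The only mild subtlety is the quantification in the statement: the right-hand side features $\|u\|_U$ for $u \in U_{ad}$, so one must be slightly careful about whether $c$ is allowed to depend on the chosen comparison control. The clean reading is that for \emph{every} $u \in U_{ad}$ the bound holds with a single universal $c$; this follows because the comparison step works verbatim with $u$ in place of $u_0$, yielding $\frac{\alpha}{2}\|\bar u(\mu)\|_U^2 \le \frac12\|S_u(\mu)-z\|_{L^2(\Omega_0)}^2 + \frac{\alpha}{2}\|u\|_U^2$, and then \eqref{uniformbound} applied to this same $u$ gives $\|S_u(\mu)\|_Y \le c_0(\|u\|_U+1)$. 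The remaining minor point is simply tracking the embedding constant from $Y$ into $L^2(\Omega_0)$, which is immediate from \eqref{norm_eqv} together with the boundedness of $\Omega$; no regularity beyond $H^1$ is needed.
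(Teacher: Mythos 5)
Your argument is correct and coincides with the paper's own proof: compare the optimal cost with the cost of an arbitrary admissible control $u$, bound the tracking term via the norm equivalence (\ref{norm_eqv}) and the uniform state bound (\ref{uniformbound}) from Lemma~\ref{lemma:S}, and take square roots. Your remark about quantifying over all $u\in U_{ad}$ is exactly how the paper reads the statement, and the embedding step needs only $\|\cdot\|_{L^2(\Omega_0)}\le\|\cdot\|_{H^1(\Omega)}$, so no extra constant $C_\Omega$ is required.
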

\begin{proof}
For a given $\mu \in \mathcal{P}$, let $u \in U_{ad}$ be an arbitrary feasible control with the corresponding state $y(\mu)$, and let $\bar y(\mu) \in Y$ denote the state associated with the optimal control $\bar u(\mu)$. Then, the optimality of $\bar u$ implies 
\begin{align*}
\frac{\alpha}{2}\|\bar u\|^2_U & \leq J(\bar u)=\frac{1}{2}\|\bar y-z\|^2_{L^2(\Omega_0)}+\frac{\alpha}{2}\|\bar u\|^2_U \\
& \leq J(u)=\frac{1}{2}\|y-z\|^2_{L^2(\Omega_0)}+\frac{\alpha}{2}\| u\|^2_U \\
& \leq \|y\|^2_{L^2(\Omega_0)}+\|z\|^2_{L^2(\Omega_0)}  +\frac{\alpha}{2}\| u\|^2_U \\
& \leq \|y\|^2_{H^1(\Omega)}+\|z\|^2_{L^2(\Omega_0)}  +\frac{\alpha}{2}\| u\|^2_U \\
& \leq \frac{1}{\rho^2_1}\|y\|^2_{Y}+\|z\|^2_{L^2(\Omega_0)}  +\frac{\alpha}{2}\| u\|^2_U \\
& \leq \frac{c^2_0}{\rho^2_1}\big(\|u\|_{U}+1\big)^2+\|z\|^2_{L^2(\Omega_0)}  +\frac{\alpha}{2}\| u\|^2_U, 
\end{align*}
where (\ref{norm_eqv}) and (\ref{uniformbound}) are used in the last two inequalities, respectively. Taking the square root of  both sides of the previous inequality gives the desired result. 
\end{proof}

\begin{theorem}
Let $\bar u_N(\mu) \in U_{ad}$ be the solution of $(\mathbb{P}_N)$ for an arbitrary  $\mu \in \mathcal{P}$. Then, there exists a constant $c>0$ independent of $\mu$ or $N$ such that there holds 
\begin{align*}
\|\bar u_N(\mu)\|_U\leq c \big( \|z\|_{L^2(\Omega_0)} + \|u\|_U +1\big)  \quad \forall \, u \in U_{ad}.
\end{align*}
\end{theorem}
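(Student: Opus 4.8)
The plan is to mirror the proof of Theorem~\ref{thm:ubound} almost verbatim, replacing the full-order state map $S_u$ by the reduced state map $S_{N,u}$ from (\ref{SNmap}) and the uniform bound (\ref{uniformbound}) by its reduced counterpart established in the lemma immediately preceding this theorem. The one point that genuinely requires attention is that the resulting constant be independent of $N$; this is automatic, because the reduced uniform bound carries the \emph{same} constant $c_0=\frac{1}{\beta_0}\max(\kappa_0,\sigma_0)$ as in Lemma~\ref{lemma:S}. That, in turn, rests on $Y_N\subset Y$ being conforming, so that the uniform coercivity estimate $\beta(\mu)\ge\beta_0$ and the uniform continuity constants $\kappa_0,\sigma_0$ are inherited on every subspace $Y_N$.

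Concretely, I would fix $\mu\in\mathcal P$ and an arbitrary $u\in U_{ad}$, note that $u$ is feasible for $(\mathbb{P}_N)$, and denote by $\bar y_N(\mu)\in Y_N$ the reduced optimal state associated with $\bar u_N(\mu)$. Using the optimality of $\bar u_N(\mu)$, then the elementary bound $\tfrac12\|\cdot-z\|_{L^2(\Omega_0)}^2\le\|\cdot\|_{L^2(\Omega_0)}^2+\|z\|_{L^2(\Omega_0)}^2$, then (\ref{norm_eqv}) together with $\|\cdot\|_{L^2(\Omega_0)}\le\|\cdot\|_{H^1(\Omega)}$, and finally the reduced uniform bound on $\|S_{N,u}(\mu)\|_Y$, one arrives at
\begin{align*}
\frac{\alpha}{2}\|\bar u_N(\mu)\|_U^2
&\le J\big(\bar u_N(\mu),\bar y_N(\mu)\big)\le J\big(u,S_{N,u}(\mu)\big) \\
&\le \frac{1}{\rho_1^2}\|S_{N,u}(\mu)\|_Y^2 + \|z\|_{L^2(\Omega_0)}^2 + \frac{\alpha}{2}\|u\|_U^2 \\
&\le \frac{c_0^2}{\rho_1^2}\big(\|u\|_U+1\big)^2 + \|z\|_{L^2(\Omega_0)}^2 + \frac{\alpha}{2}\|u\|_U^2 .
\end{align*}
Taking square roots and using the subadditivity of the square root then gives the claimed estimate with, for instance, $c:=\max\!\big(1+\sqrt{2/\alpha}\,c_0/\rho_1,\ \sqrt{2/\alpha}\big)$, which depends only on $\alpha$, $\rho_1$, $\beta_0$, $\kappa_0$, $\sigma_0$, hence neither on $\mu$ nor on $N$.

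I do not expect any real obstacle: the argument is just the optimality/Lax--Milgram estimate of Theorem~\ref{thm:ubound} with $Y$ replaced throughout by $Y_N$. The only thing to be careful about is to keep track of the $N$-independence of each constant entering the chain of inequalities, which is why it is worth recording explicitly that the reduced uniform bound holds with the same $c_0$ as in the full-order case; in fact Theorem~\ref{thm:ubound} and the present statement could be proved simultaneously, the two arguments differing solely in whether the ansatz and test space is $Y$ or the conforming subspace $Y_N$.
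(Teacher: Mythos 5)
Your proposal is correct and follows exactly the route the paper intends: its proof of this theorem is simply ``along the lines of Theorem~\ref{thm:ubound}'s proof,'' i.e.\ the same optimality-plus-uniform-bound argument with $S_u$ replaced by $S_{N,u}$ and the reduced counterpart of (\ref{uniformbound}). Your explicit remark that the reduced bound carries the same constant $c_0$, so the final constant is independent of $N$, is precisely the point that makes the transfer work.
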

\begin{proof}
A long the lines of Theorem~\ref{thm:ubound}'s proof. 
\end{proof}

\begin{theorem}
\label{thm:u1u2}
Let $u(\mu)\in U_{ad}$ be the solution of $(\mathbb{P})$ corresponding to some given $\mu \in \mathcal{P}$. Then, for any $\mu_1, \mu_2 \in \mathcal{P}$ the following estimate holds
\begin{align*}
\|u(\mu_1)-u(\mu_2)\|_U \leq c \sqrt{ \|a(\mu_2)-a(\mu_1)\|_A + \|b(\mu_2)-b(\mu_1)\|_B  + \|f(\mu_2)-f(\mu_1)\|_F }
\end{align*}
for some $c>0$ independent of $\mu_1$ and $\mu_2$. Here $a(\mu):=a(\cdot,\cdot;\mu)$, $b(\mu):=b(\cdot,\cdot;\mu)$ and $f(\mu):=f(\cdot;\mu)$ for any $\mu \in \mathcal{P}$.

\end{theorem}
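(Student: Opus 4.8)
The plan is to exploit the optimality conditions \eqref{os_u} for $u(\mu_1)$ and $u(\mu_2)$ together with a strong-convexity (coercivity-of-the-objective) argument analogous to what drives the estimate \eqref{uuN} in the proof of Theorem~\ref{thm:uyp}. Write $u_i := u(\mu_i)$, $y_i := S_{u_i}(\mu_i)$ the associated state, and $p_i$ the associated adjoint state, for $i = 1,2$. Testing \eqref{os_u} at $\mu_1$ with $v := u_2$ and at $\mu_2$ with $v := u_1$ and adding gives
\begin{align*}
\alpha\|u_1 - u_2\|_U^2 \leq b(u_2 - u_1, p_1; \mu_1) + b(u_1 - u_2, p_2; \mu_2).
\end{align*}
The right-hand side is then split by inserting $\mp b(u_2 - u_1, p_1; \mu_2)$, so that one piece is controlled by $\|b(\cdot,\cdot;\mu_2) - b(\cdot,\cdot;\mu_1)\|_B$ times $\|u_1-u_2\|_U\|p_1\|_Y$, and the remaining piece is $b(u_2-u_1, p_1 - p_2; \mu_2)$.

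Next I would handle $b(u_2-u_1, p_1-p_2;\mu_2)$ by relating it back to the state difference, following the $S_1$-estimate pattern of Theorem~\ref{thm:uyp}. Using the adjoint equation \eqref{os_p} and the state equation \eqref{os_y} at the two parameters, one rewrites $b(u_2-u_1, p_1-p_2;\mu_2)$ (up to further terms measuring $\|a(\mu_2)-a(\mu_1)\|_A$, $\|b(\mu_2)-b(\mu_1)\|_B$, $\|f(\mu_2)-f(\mu_1)\|_F$, each multiplied by bounded norms of $y_i$, $p_i$, $u_i$) as $(y_1 - y_2, \text{something})_{L^2(\Omega_0)}$, and absorbs the genuinely "$\|u_1-u_2\|_U$-proportional" contributions into the left-hand side via Young's inequality. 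The states satisfy $\|y_1 - y_2\|_Y \leq \|S_{u_1}(\mu_1) - S_{u_1}(\mu_2)\|_Y + \|S_{u_1}(\mu_2) - S_{u_2}(\mu_2)\|_Y$: the first term is bounded by \eqref{Scontinuous} of Lemma~\ref{lemma:S} (giving exactly the data differences under the square root), and the second is $\leq \frac{\kappa_0}{\beta_0}\|u_1 - u_2\|_U$ by coercivity of $a(\cdot,\cdot;\mu_2)$. The analogous bound for $\|p_1 - p_2\|_Y$ reduces to the state-difference bound plus data differences, as in part (II) of the Lemma preceding Theorem~\ref{thm:uyp}.

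After collecting terms, one arrives at an inequality of the form
\begin{align*}
\alpha\|u_1 - u_2\|_U^2 \leq C_1 \|u_1-u_2\|_U\big(\|a(\mu_2)-a(\mu_1)\|_A + \|b(\mu_2)-b(\mu_1)\|_B + \|f(\mu_2)-f(\mu_1)\|_F\big) + C_2\big(\cdots\big),
\end{align*}
where the constants $C_1, C_2$ depend only on $\beta_0, \kappa_0, \sigma_0, \rho_1, \alpha, \|z\|_{L^2(\Omega_0)}$ and a uniform bound on $\|u_i\|_U$ — the latter being exactly Theorem~\ref{thm:ubound} together with the obvious a~priori bound $\|u(\mu)\|_U \leq \sqrt{2 J(0)/\alpha}$ which makes the right side of \eqref{uniformbound2} itself uniformly bounded. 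Dividing through by $\|u_1-u_2\|_U$ where nonzero (and noting the estimate is trivial otherwise), one gets $\|u_1-u_2\|_U$ bounded by a constant times the sum of the three data differences; taking the square root of that linear bound, or more precisely using $x \leq C\sqrt{x}\cdot(\text{stuff}) \Rightarrow x \leq C^2(\text{stuff})$-type reasoning when the data differences are small and a crude uniform bound when they are not, yields the claimed $\sqrt{\,\cdot\,}$ estimate.

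The main obstacle I anticipate is the bookkeeping in the second step: correctly splitting $b(u_2-u_1, p_1-p_2;\mu_2)$ and the adjoint-state difference so that every term is either (i) proportional to $\|u_1-u_2\|_U^2$ with a small-enough constant to be absorbed on the left, (ii) proportional to $\|u_1-u_2\|_U$ times a data difference, or (iii) a pure data-difference term — while keeping all remaining factors ($\|y_i\|_Y$, $\|p_i\|_Y$) uniformly bounded independently of $\mu$. The adjoint bound requires care because $p_i$ depends on $\mu_i$ both through $a$ and through the right-hand side $y_i - z$, so one genuinely needs the state continuity estimate \eqref{Scontinuous} feeding into an adjoint continuity estimate, and a uniform bound on $\|p_i\|_Y$ (which follows from coercivity, $\|y_i - z\|_{L^2(\Omega_0)}$, and the uniform bound on $\|y_i\|_Y$ from \eqref{uniformbound}). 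Everything else is the same Young's-inequality and triangle-inequality machinery already deployed in the a~posteriori analysis.
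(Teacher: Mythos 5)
Your opening move is exactly the paper's: test \eqref{os_u1} with $u_2$ and \eqref{os_u2} with $u_1$, add, and reduce everything to perturbation estimates for the solution map $S_u$ (Lemma~\ref{lemma:S}) together with the uniform control bound of Theorem~\ref{thm:ubound}. Where you differ is the bookkeeping of the right-hand side: you shift both $b$-terms to the parameter $\mu_2$, paying a term $\|b(\mu_2)-b(\mu_1)\|_B\,\|u_1-u_2\|_U\,\|p_1\|_Y$, and then work with $b(u_2-u_1,p_1-p_2;\mu_2)$, which forces you to absorb $\|u_1-u_2\|_U$-proportional pieces by Young's inequality and to track $p_1-p_2$. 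The paper instead keeps the two terms at their own parameters and introduces the two auxiliary states $\tilde y_1:=S_{u_1}(\mu_2)$ and $\tilde y_2:=S_{u_2}(\mu_1)$, so that $b(u_2-u_1,p_1;\mu_1)+b(u_1-u_2,p_2;\mu_2)=(y_1-z,\tilde y_2-y_1)_{L^2(\Omega_0)}+(y_2-z,\tilde y_1-y_2)_{L^2(\Omega_0)}$, and the symmetric rearrangement produces $-\|y_1-y_2\|^2_{L^2(\Omega_0)}$, which is simply dropped. The payoff is that the paper's right-hand side contains \emph{no} factor $\|u_1-u_2\|_U$ at all: one lands directly at $\alpha\|u_1-u_2\|_U^2\le c\,\epsilon$ with $\epsilon:=\|a(\mu_2)-a(\mu_1)\|_A+\|b(\mu_2)-b(\mu_1)\|_B+\|f(\mu_2)-f(\mu_1)\|_F$, and the square root is immediate; no Young absorption, no adjoint-difference estimate, no case distinction. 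Your decomposition can be made to work, but it is strictly more laborious.

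The genuine problem is your final step. From an inequality of the form $\alpha\|u_1-u_2\|_U^2\le C_1\|u_1-u_2\|_U\,\epsilon+C_2\,\epsilon$ you cannot ``divide through by $\|u_1-u_2\|_U$'' and conclude a bound linear in $\epsilon$: the quotient $C_2\,\epsilon/\|u_1-u_2\|_U$ is not controlled, and indeed no linear (Lipschitz-in-data) bound is to be expected -- the theorem only asserts a H\"older-$\tfrac12$ estimate. Likewise the implication ``$x\le C\sqrt{x}\cdot s\Rightarrow x\le C^2 s$'' is false as stated (it gives $x\le C^2 s^2$). The correct finish from your mixed inequality is elementary but different: since $\|u_1-u_2\|_U$ is bounded uniformly in $\mu_1,\mu_2$, estimate $C_1\|u_1-u_2\|_U\,\epsilon\le C_1 M\,\epsilon$ (or use Young plus the boundedness $\epsilon\le 2\gamma_0+2\kappa_0+2\sigma_0$), obtaining $\alpha\|u_1-u_2\|_U^2\le C\,\epsilon$ and hence the claimed square-root bound -- which is exactly the form the paper reaches directly. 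One further small repair: the uniform bound on $\|u(\mu)\|_U$ should not be argued via $J(0)$, since $0$ need not belong to $U_{ad}$; instead fix an arbitrary $u_0\in U_{ad}$ (nonempty by assumption) and apply \eqref{uniformbound2} with $u=u_0$, which is how the paper uses Theorem~\ref{thm:ubound}.
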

\begin{proof}
Let $u_1:=u(\mu_1)$ and $u_2:=u(\mu_2)$. According to Theorem~\ref{thm:oc}, the optimal triple $(u_1,y_1,p_1) \in U_{ad} \times Y \times Y$ satisfies 
\begin{align}
& a(y_1,v;\mu_1)=b (u_1, v;\mu_1) + f(v;\mu_1)  \quad \forall \, v \in Y, \label{os_y1}\\
& a(v,p_1;\mu_1)=(y_1-z,v)_{L^2(\Omega_0)} \quad \forall \, v \in Y, \label{os_p1}\\
& b(v-u_1,p_1;\mu_1)  + \alpha (u_1, v-u_1)_U \geq 0 \quad \forall \, v \in U_{ad}, \label{os_u1}
\end{align}
while $(u_2,y_2,p_2) \in U_{ad} \times Y \times Y$ satisfies 
\begin{align}
& a(y_2,v;\mu_2)=b (u_2, v;\mu_2) + f(v;\mu_2)  \quad \forall \, v \in Y, \label{os_y2}\\
& a(v,p_2;\mu_2)=(y_2-z,v)_{L^2(\Omega_0)} \quad \forall \, v \in Y, \label{os_p2}\\
& b(v-u_2,p_2;\mu_2)  + \alpha (u_2, v-u_2)_U \geq 0 \quad \forall \, v \in U_{ad}. \label{os_u2}
\end{align}
We shall utilize the auxiliary functions $\tilde y_1, \tilde y_2 \in Y$ satisfying 
\begin{align*}
a(\tilde y_1, v; \mu_2 ) & =b(u_1, v ; \mu_2 ) +f(v;\mu_2) \quad \forall \, v \in Y, \\
a(\tilde y_2, v; \mu_1) & =b(u_2, v ; \mu_1) +f(v;\mu_1) \quad \forall \, v \in Y.
\end{align*}

Testing (\ref{os_u1}) against $u_2$, and (\ref{os_u2}) against $u_1$, and adding the resulting inequalities yields 
\begin{align*}
\alpha \|u_1-u_2\|^2_U & \leq b(u_2-u_1,p_1 ; \mu_1)+ b(u_1-u_2, p_2 ; \mu_2)\\
& =  b(u_2,p_1 ; \mu_1)-  b(u_1,p_1 ; \mu_1)+ b(u_1, p_2 ; \mu_2)-  b(u_2, p_2 ; \mu_2) \\
& = b(u_2,p_1 ; \mu_1) -a(y_1,p_1;\mu_1)+ f(p_1;\mu_1) \\
& \quad +  b(u_1,p_2 ; \mu_2) -a(y_2,p_2;\mu_2)+ f(p_2;\mu_2) \\
& =a(\tilde y_2-y_1,p_1;\mu_1) + a(\tilde y_1-y_2,p_2;\mu_2) \\
&=(y_1-z,\tilde y_2-y_1)_{L^2(\Omega_0)} + (y_2-z,\tilde y_1-y_2)_{L^2(\Omega_0)} \\
&=(y_1-z,\tilde y_2-y_2)_{L^2(\Omega_0)} + (y_2-z,\tilde y_1-y_1)_{L^2(\Omega_0)} -\|y_1-y_2\|^2_{L^2(\Omega_0)}\\
& \leq \|y_1-z\|_{L^2(\Omega_0)} \|\tilde y_2-y_2\|_{L^2(\Omega_0)} + \|y_2-z\|_{L^2(\Omega_0)} \|\tilde y_1-y_1\|_{L^2(\Omega_0)}\\
& \leq c \Big(\|u_1\|_{U}+ \|z\|_{L^2(\Omega_0)} +1\Big) \|\tilde y_2-y_2\|_{L^2(\Omega_0)} \\
& \quad + c \Big(\|u_2\|_{U}+ \|z\|_{L^2(\Omega_0)} +1\Big) \|\tilde y_1-y_1\|_{L^2(\Omega_0)},  
\end{align*}
where  (\ref{uniformbound}) is used in the last inequality. We proceed by utilizing (\ref{Scontinuous}) 
\begin{align*}
&\leq c \Big(\|u_1\|_{U}+ \|z\|_{L^2(\Omega_0)} +1\Big)  \Big(  \|a(\mu_2)-a(\mu_1)\|_A (\|u_2\|_U  + 1) \\
& \quad + \|b(\mu_2)-b(\mu_1)\|_B \|u_2\|_U + \|f(\mu_2)-f(\mu_1)\|_F \Big) \\
& \quad +  c \Big(\|u_2\|_{U}+ \|z\|_{L^2(\Omega_0)} +1\Big)  \Big(  \|a(\mu_2)-a(\mu_1)\|_A (\|u_1\|_U  + 1) \\
& \quad + \|b(\mu_2)-b(\mu_1)\|_B \|u_1\|_U + \|f(\mu_2)-f(\mu_1)\|_F \Big) \\
& \leq c \Big(  \|a(\mu_2)-a(\mu_1)\|_A  + \|b(\mu_2)-b(\mu_1)\|_B + \|f(\mu_2)-f(\mu_1)\|_F \Big)
\end{align*}
Recalling (\ref{uniformbound2}) and taking the square root of the both sides gives the desired result. 
\end{proof}

\begin{theorem}
\label{thm:uN1uN2}
Let $u_N(\mu) \in U_{ad}$ be the solution of $(\mathbb{P}_N)$ corresponding to some given $\mu\in \mathcal{P}$. Then, for any $\mu_1, \mu_2 \in \mathcal{P}$ the following estimate holds
\begin{align*}
\|u_N(\mu_1)-u_N(\mu_2)\|_U \leq c \sqrt{ \|a(\mu_2)-a(\mu_1)\|_A + \|b(\mu_2)-b(\mu_1)\|_B  + \|f(\mu_2)-f(\mu_1)\|_F }
\end{align*}
for some $c>0$ independent of $\mu_1, \mu_2$ or $N$. Here $a(\mu):=a(\cdot,\cdot;\mu)$, $b(\mu):=b(\cdot,\cdot;\mu)$ and $f(\mu):=f(\cdot;\mu)$ for any $\mu \in \mathcal{P}$.

\end{theorem}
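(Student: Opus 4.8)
The plan is to mirror the proof of Theorem~\ref{thm:u1u2} almost verbatim, replacing the role of $Y$ by $Y_N$ throughout. Fix $\mu_1,\mu_2\in\mathcal{P}$, set $u_{N,1}:=u_N(\mu_1)$, $u_{N,2}:=u_N(\mu_2)$, with associated optimal triples $(u_{N,i},y_{N,i},p_{N,i})\in U_{ad}\times Y_N\times Y_N$ satisfying the reduced optimality system \eqref{os_yN}--\eqref{os_uN} at the respective parameters. Introduce the auxiliary reduced states $\tilde y_{N,1},\tilde y_{N,2}\in Y_N$ defined by $a(\tilde y_{N,1},v_N;\mu_2)=b(u_{N,1},v_N;\mu_2)+f(v_N;\mu_2)$ for all $v_N\in Y_N$, and symmetrically for $\tilde y_{N,2}$ with the roles of $\mu_1,\mu_2$ swapped. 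These are well defined since $a(\cdot,\cdot;\mu)$ restricted to $Y_N\times Y_N$ inherits the uniform coercivity constant $\beta_0$.

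The key computation is to test \eqref{os_uN} at $\mu_1$ against $v=u_{N,2}\in U_{ad}$ and \eqref{os_uN} at $\mu_2$ against $v=u_{N,1}\in U_{ad}$, add the two inequalities, and then run exactly the chain of identities from the proof of Theorem~\ref{thm:u1u2}: rewrite the $b$-terms using the reduced state equations \eqref{os_yN}, insert the auxiliary states $\tilde y_{N,i}$, use the reduced adjoint equations \eqref{os_pN} (legitimate here because $\tilde y_{N,i}-y_{N,j}\in Y_N$, so it is an admissible test function), and collapse to an $L^2(\Omega_0)$ inner product, dropping the resulting $-\|y_{N,1}-y_{N,2}\|^2_{L^2(\Omega_0)}$ term. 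This yields
\begin{align*}
\alpha\|u_{N,1}-u_{N,2}\|_U^2 \leq \|y_{N,1}-z\|_{L^2(\Omega_0)}\|\tilde y_{N,2}-y_{N,2}\|_{L^2(\Omega_0)} + \|y_{N,2}-z\|_{L^2(\Omega_0)}\|\tilde y_{N,1}-y_{N,1}\|_{L^2(\Omega_0)}.
\end{align*}
One then bounds $\|y_{N,i}-z\|_{L^2(\Omega_0)}$ using the uniform bound on $\|y_{N,i}\|_Y$ from the lemma preceding Theorem~\ref{thm:ubound} (the reduced analogue of \eqref{uniformbound}), and bounds $\|\tilde y_{N,i}-y_{N,j}\|_{L^2(\Omega_0)}\le \rho_1^{-1}\|\tilde y_{N,i}-y_{N,j}\|_Y$ via \eqref{norm_eqv}, where $\tilde y_{N,i}-y_{N,j}=S_{N,u_{N,i}}(\mu_k)-S_{N,u_{N,i}}(\mu_l)$ is controlled by the reduced continuity estimate (the $Y_N$ analogue of \eqref{Scontinuous}).

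Finally, I would absorb the control norms $\|u_{N,i}\|_U$ into a single constant using the $N$-independent, $\mu$-independent bound $\|u_N(\mu)\|_U\le c(\|z\|_{L^2(\Omega_0)}+\|u\|_U+1)$ established in the theorem just before Theorem~\ref{thm:u1u2} (applied with any fixed feasible $u\in U_{ad}$), giving $\alpha\|u_{N,1}-u_{N,2}\|_U^2\le c(\|a(\mu_2)-a(\mu_1)\|_A+\|b(\mu_2)-b(\mu_1)\|_B+\|f(\mu_2)-f(\mu_1)\|_F)$, and take square roots. Since every constant invoked ($c_0$, $\beta_0$, $\rho_1$, and the $u$-boundedness constant) is independent of both $\mu$ and $N$, the resulting $c$ is as claimed. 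I do not anticipate a genuine obstacle: the only point requiring care is the bookkeeping check that each auxiliary function used as a test vector actually lies in $Y_N$ (so that the reduced equations \eqref{os_yN}--\eqref{os_pN} may be applied), which holds because all of $y_{N,i}$, $p_{N,i}$, and $\tilde y_{N,i}$ are elements of $Y_N$; and recording that the reduced versions of \eqref{uniformbound}, \eqref{Scontinuous}, and \eqref{uniformbound2} have already been proven "along the lines of" their unreduced counterparts with the same constants. Hence the proof reduces to a single remark: the argument of Theorem~\ref{thm:u1u2} applies verbatim in $Y_N$, and all constants therein are $N$-independent.
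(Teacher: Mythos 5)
Your proposal is correct and matches the paper exactly: the paper's proof of this theorem is simply ``along the lines of Theorem~\ref{thm:u1u2}'s proof,'' and you carry out precisely that argument in $Y_N$, using the reduced analogues of \eqref{uniformbound}, \eqref{Scontinuous} and \eqref{uniformbound2} and checking that all test functions lie in $Y_N$ so the constants remain independent of $N$ and $\mu$.
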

\begin{proof}
A long the lines of Theorem~\ref{thm:u1u2}'s proof. 
\end{proof}

Recall that the space $Y_N$ considered in $(\mathbb{P}_N)$ is constructed from the snapshots $\{y(\mu^1),p(\mu^1), \ldots, y(\mu^N),p(\mu^N)\}$ taken from $(\mathbb{P})$ at the sample parameters $\{\mu^1, \ldots, \mu^N \}=:\mathcal{P}_N \subset \mathcal{P}$. We denote 
\begin{align*}
h_N:=\max_{\mu \in \mathcal{P}}\min_{\mu' \in \mathcal{P}_N} \|\mu - \mu'\|
\end{align*}
with $\|\cdot\|$ being  the Euclidean norm in $\mathbb{R}^p$. We shall assume that $0<h_N \leq 1$ and that  as $N \to \infty$, $h_N \to 0$, i.e. the set $\mathcal{P}_N$ gets denser in $\mathcal{P}$ as $N$ increases. Furthermore, it is natural to assume that for any $\mu \in \mathcal{P}_N$ there holds 
\begin{align*}
u_N(\mu)=u(\mu),
\end{align*}
where $u_N(\mu)$ and $u(\mu)$ denote the solutions of $(\mathbb{P}_N)$ and  $(\mathbb{P})$, respectively, at the given $\mu$ since the mapping $\mathcal{P}\ni \mu \mapsto u_N(\mu) \in U$ is supposed to interpolate the mapping  $\mathcal{P}\ni \mu \mapsto u(\mu) \in U$ at the set of parameters $\mathcal{P}_N$. Finally, we assume that for any $\mu_1, \mu_2 \in \mathcal{P}$ we have
\begin{align*}
\|a(\mu_2)-a(\mu_1)\|_A &\leq c \|\mu_2-\mu_1\|^{q_a},\\
\|b(\mu_2)-b(\mu_1)\|_B &\leq c \|\mu_2-\mu_1\|^{q_b},\\
\|f(\mu_2)-f(\mu_1)\|_F &\leq c \|\mu_2-\mu_1\|^{q_f},
\end{align*}
for some $c, q_a, q_b, q_f >0$ independent of $\mu_1$ or $\mu_2$ where $\|\cdot\|$ denotes the Euclidean norm in $\mathbb{R}^p$,  i.e. the bilinear forms $a$ and $b$ and the linear form $f$ are continuous in $\mu$. Under these assumptions, we formulate the next theorem. 

\begin{theorem}\label{convergence}
Let $u_N(\mu), u(\mu) \in U$ denote the solutions of $(\mathbb{P}_N)$ and  $(\mathbb{P})$, respectively, for a given $\mu \in \mathcal{P}$. Then, the following estimate holds 
\begin{align*}
\|u_N(\mu)-u(\mu)\|_U \leq c h^t_N
\end{align*}
where $t:=\frac{1}{2}\min\{q_a, q_b, q_f\}$ and $c>0$ is a constant independent of $h_N$ or $\mu$.
\end{theorem}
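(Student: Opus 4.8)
The plan is to exploit the interpolation property $u_N(\mu')=u(\mu')$ for $\mu'\in\mathcal{P}_N$ together with the H\"older-type continuity in the parameter established in Theorems~\ref{thm:u1u2} and~\ref{thm:uN1uN2}. First I would fix an arbitrary $\mu\in\mathcal{P}$ and pick $\mu'\in\mathcal{P}_N$ realizing the minimum in the definition of $h_N$, so that $\|\mu-\mu'\|\le h_N$. Then a triangle inequality splits the error as
\begin{align*}
\|u_N(\mu)-u(\mu)\|_U \le \|u_N(\mu)-u_N(\mu')\|_U + \|u_N(\mu')-u(\mu')\|_U + \|u(\mu')-u(\mu)\|_U,
\end{align*}
and the middle term vanishes by the interpolation assumption, leaving only the two ``continuity in $\mu$'' terms to control.

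Next I would apply Theorem~\ref{thm:u1u2} to the term $\|u(\mu')-u(\mu)\|_U$ and Theorem~\ref{thm:uN1uN2} to the term $\|u_N(\mu)-u_N(\mu')\|_U$; crucially the constant in the latter is independent of $N$, which is what makes the final bound uniform in $N$. Both terms are thereby bounded by
\begin{align*}
c\sqrt{\|a(\mu)-a(\mu')\|_A + \|b(\mu)-b(\mu')\|_B + \|f(\mu)-f(\mu')\|_F}.
\end{align*}
Now I would insert the assumed H\"older continuity of $a$, $b$, $f$ in $\mu$, giving a bound of the form $c\sqrt{\|\mu-\mu'\|^{q_a}+\|\mu-\mu'\|^{q_b}+\|\mu-\mu'\|^{q_f}}$. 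Using $0<\|\mu-\mu'\|\le h_N\le 1$, each power $\|\mu-\mu'\|^{q_\bullet}$ is dominated by $\|\mu-\mu'\|^{\min\{q_a,q_b,q_f\}}\le h_N^{\min\{q_a,q_b,q_f\}}$, so the whole square root is at most $c\, h_N^{\frac12\min\{q_a,q_b,q_f\}}=c\,h_N^{t}$. Summing the two contributions and absorbing constants yields $\|u_N(\mu)-u(\mu)\|_U\le c\,h_N^{t}$ with $c$ independent of $\mu$ and $N$.

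The only mild subtlety — not really an obstacle — is the bookkeeping that the constant from Theorem~\ref{thm:uN1uN2} does not degrade as $N\to\infty$; this is exactly the reason the earlier uniform-boundedness results (the $N$-independent a priori bound on $\|\bar u_N(\mu)\|_U$ and the $N$-independent continuity constants in Lemma after~\eqref{SNmap}) were proved, so one just has to cite them. A second point to note is that the estimate is stated for a single $\mu\in\mathcal{P}$, so no uniformity over $\mu$ beyond the $N$-independence of constants is needed, and the case $\mu\in\mathcal{P}_N$ (where $h_N$ might in principle be attained with $\mu'=\mu$) is trivially covered since the right-hand side is then $0$ and so is the left. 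Everything reduces to a triangle inequality plus two invocations of already-proven theorems and the elementary inequality $x^{q}\le x^{q_{\min}}$ for $0<x\le1$.
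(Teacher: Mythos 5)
Your proposal is correct and follows essentially the same argument as the paper: the same triangle-inequality split at the nearest sample parameter $\mu^\ast\in\mathcal{P}_N$, the vanishing middle term via $u_N(\mu^\ast)=u(\mu^\ast)$, the invocation of Theorems~\ref{thm:u1u2} and~\ref{thm:uN1uN2} with their $\mu$- and $N$-independent constants, and finally the H\"older continuity of $a,b,f$ together with $h_N\le 1$ to arrive at $c\,h_N^t$ with $t=\frac{1}{2}\min\{q_a,q_b,q_f\}$.
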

\begin{proof}
Let $\mu \in \mathcal{P}$ be given, and let $\mu^\ast:=\mbox{arg min}_{\mu' \in \mathcal{P}_N}\|\mu-\mu'\|$. Then, recalling Theorem~\ref{thm:u1u2}, Theorem~\ref{thm:uN1uN2}, the fact that $u_N(\mu^\ast)=u(\mu^\ast)$, and the continuity of $a, b$ and $f$ gives
\begin{align*}
\|u(\mu)-u_N(\mu)\|_U & \leq \|u(\mu)-u(\mu^\ast)\|_U +\|u(\mu^\ast)-u_N(\mu^\ast)\|_U+\|u_N(\mu^\ast)-u_N(\mu)\|_U\\
& \leq c \sqrt{ \|a(\mu)-a(\mu^\ast)\|_A + \|b(\mu)-b(\mu^\ast)\|_B  + \|f(\mu)-f(\mu^\ast)\|_F }\\
& \leq c \sqrt{ \|\mu-\mu^\ast\|^{q_a} + \|\mu-\mu^\ast\|^{q_b}  + \|\mu-\mu^\ast\|^{q_f} }\\
& \leq c \sqrt{ h^{q_a}_N + h^{q_b}_N  + h^{q_f}_N } \leq c h^t_N
\end{align*} 
where $t:=\frac{1}{2}\min\{q_a, q_b, q_f\}$.
\end{proof}

\section{Numerical examples}
In this section we apply our theoretical findings to construct numerically reduced surrogates for  two  examples, namely a thermal block problem and a Graetz flow problem, which are taken from \cite{bader2016certified}. In particular, we discretize those two examples using variational discretization, then we build their reduced counterparts using the greedy procedure from Algorithm~\ref{algo:greedy}, where we use the bound $2\Delta_u(\mu)/\|u_N\|_U$ from Corollary~\ref{thm:u} for the estimator $\Delta(Y_N,\mu)$. Finally, we compare the solutions of the reduced problems to their corresponding ones from the highly dimensional problems to asses the quality of the obtained reduced models.

\begin{example}[Thermal block]
\label{thermalblock}
We consider the control problem 
\[
\min_{(u,y) \in U_{ad} \times Y} J(u,y)=\frac{1}{2} \| y-z \|_{L^2(\Omega_0)}^2  + \frac{\alpha}{2} \| u \|_{U}^2  
\]
subject to 
\[
\mu \int_{\Omega_1} \nabla y \cdot \nabla v \, dx + \int_{\Omega_2} \nabla y \cdot \nabla v \, dx= \int_{\Omega}  u v \, dx   \quad \forall \, v \in Y, 
\]
\end{example}
\noindent where
\begin{align*}
&\Omega_1:=(0,0.5)\times (0,1), \quad \Omega_2:=(0.5,1)\times (0,1), \quad \Omega:=\Omega_1 \cup \Omega_2,\\
& \Omega_0:=\Omega, \quad z(x)=1 \mbox{ in }\Omega, \quad  U:=L^2(\Omega), \quad (\cdot,\cdot)_U:=(\cdot,\cdot)_{L^2(\Omega)}, \\
& U_{ad}:=\{u \in L^2(\Omega): u(x)\geq u_a(x) \mbox{ a.e } x \in \Omega\}, \quad u_a(x):=2+2(x_1-0.5), \\
& \mu \in \mathcal{P}:=[0.5, 3], \quad \alpha = 10^{-2}, 
\end{align*}
and the space $Y \subset H^1_0(\Omega) \cap C(\bar{\Omega})$ is the space of piecewise linear and continuous finite elements endowed with the inner product $(\cdot,\cdot)_Y:=(\nabla \cdot, \nabla \cdot)_{L^2(\Omega)}$. The underlying PDE admits a homogeneous Dirichlet boundary condition on the boundary  $\partial\Omega$ of the domain $\Omega$.   

From the previous given data, it is an easy task to see that (\ref{norm_eqv}) holds with $\rho_2=1$ and $\rho_1=\frac{1}{\sqrt{c_p^2+1}}$ where $c_p=\frac{1}{\sqrt{2}\pi }$ is the  Poincar\'{e}'s constant  in the inequality 
\[
 \| v \|_{L^2(\Omega)} \leq c_p \| \nabla v \|_{L^2(\Omega)} \quad \forall \, v \in H^1_0(\Omega).
\]
Furthermore, we  take $\tilde \kappa(\mu)=c_p$ and $\tilde \beta(\mu)=\min(\mu,1)$.

We use a uniform triangulation for $\Omega$ such that $\mbox{dim}(Y) \approx 8300$. The solution of both the variational discrete control problem and the reduced control problem for a given parameter $\mu$  is achieved by solving the corresponding optimality conditions using a  semismooth Newton's method with the stopping criteria 
\[
\frac{1}{\alpha} \| p^{(k)} -p^{(k+1)} \|_{L^2(\Omega)} \leq 10^{-11},
\]
where $p^{(k)}$ is the adjoint variable at the $k$-th iteration.

The reduced space  $Y_N$ for the considered problem was constructed employing the greedy procedure introduced in  Algorithm~\ref{algo:greedy} with the choice $S_{\mbox{\scriptsize  train}}:=\{s_j\}_{j=1}^{100}$, $s_j:=0.5 \times (3/0.5)^{(j-1)/99}$, $\mu^1:=0.5$,  $\varepsilon_{\mbox{\scriptsize tol}} =10^{-8}$, $N_{\mbox{\scriptsize max}}=30$,  and $\Delta(Y_N,\mu):=2 \Delta_u(\mu)/\|u_N\|_{L^2(\Omega)}$.

The algorithm terminated  before reaching the prescribed tolerance $\varepsilon_{\mbox{\scriptsize tol}} $   and that was after 22 iterations as it could not enrich the reduced basis with any new linearly independent samples. To investigate the quality of the obtained reduced basis and the sharpness of the employed upper bound $\Delta(Y_N,\mu)$, we compute the maximum of the relative error   $\|u-u_N\|_{L^2(\Omega)}/\|u\|_{L^2(\Omega)}$ and of the corresponding bound  $2 \Delta_u(\mu)/\|u_N\|_{L^2(\Omega)}$ over the set $S_{\mbox{\scriptsize  test}}:=\{s_j\}_{j=1}^{125}$, $s_j:=0.503\times(2.99/0.503)^{(j-1)/125}$ for the greedy algorithm iterations $N=1,\ldots,22$. The graphical illustration is presented in Figure~\ref{figure:example_1}. We see that the error decays  dramatically in the first nine iterations, namely it drops from 1 to slightly above $10^{-6}$, then the decay  becomes very slow and the error almost stabilizes at $10^{-6}$ in the last four iterations. As predicted in Remark~\ref{error_gap}, we can see a gap between the relative error and the used estimator $\Delta(Y_N,\mu)$. 
This plot compares to Fig.1(b) of \cite{bader2016certified}. We observe that four iterations of the greedy algorithm with our approach deliver the same error reduction as thirty iterations of the greedy algorithm in \cite{bader2016certified}. A similar behaviour is observed for Example \ref{graetzflow} with the Graetz flow in Figure \ref{figure:example_2}, which compares to the results documented in Fig. 3(b) of \cite{bader2016certified}. For this example six iterations of the greedy algorithm with our approach deliver the same error reduction as thirty iterations of the greedy algorithm in \cite{bader2016certified}.

\begin{figure}[p]
	\centering
	\includegraphics[trim = 40mm 85mm 40mm 90mm, width=1\textwidth]{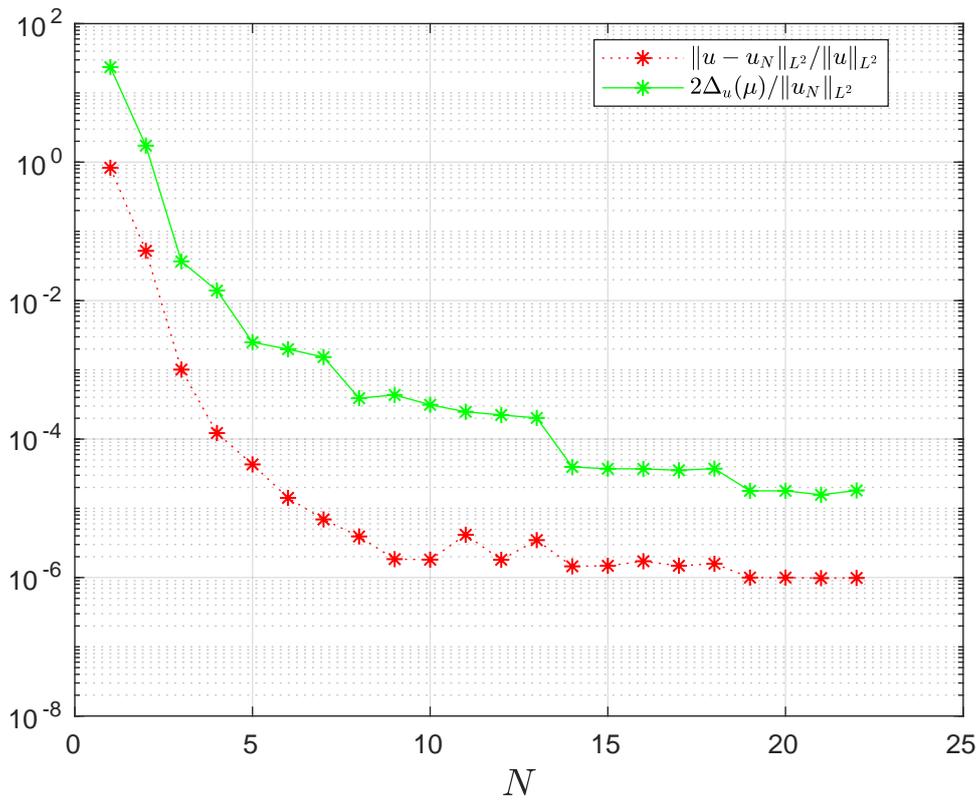}
	\caption{Example~\ref{thermalblock}: The maximum of $\|u-u_N\|_{L^2(\Omega)}/\|u\|_{L^2(\Omega)}$ the relative error of controls   and the corresponding  upper bounds $2 \Delta_u(\mu)/\|u_N\|_{L^2(\Omega)}$ over $S_{\mbox{\scriptsize  test}}$ versus the greedy algorithm iterations $N=1,\ldots,22$.}
	\label{figure:example_1}
\end{figure}

\begin{example}[Graetz flow] We consider the problem
\label{graetzflow}
\[
\min_{(u,y) \in U_{ad}(\mu_2) \times Y(\mu_2)} J(u,y)=\frac{1}{2} \| y-z \|_{L^2(\Omega_0(\mu_2))}^2  + \frac{\alpha}{2} \| u \|_{U(\mu_2)}^2  
\]
subject to 
\[
\frac{1}{\mu_1} \int_{\Omega(\mu_2)} \nabla y \cdot \nabla v \, dx + \int_{\Omega(\mu_2)} \beta(x)\cdot\nabla y   v \, dx= \int_{\Omega(\mu_2)}  u v \, dx   \quad \forall \, v \in Y(\mu_2), 
\]
\end{example}
\noindent with the data
\begin{align*}
&\Omega(\mu_2):=(0,1.5+\mu_2)\times (0,1), \quad \Omega_1(\mu_2):=(0.2\mu_2,0.8\mu_2)\times (0.3,0.7), \\ &\Omega_2(\mu_2):=(\mu_2+0.2,\mu_2+1.5)\times (0.3,0.7),  \quad \Omega_0(\mu_2):=\Omega_1(\mu_2)\cup\Omega_2(\mu_2)\\
& \beta(x)=(x_2(1-x_2),0)^T \mbox{ in }\Omega(\mu_2), \quad  z(x)=0.5 \mbox{ in }\Omega_1(\mu_2),  \quad z(x)=2 \mbox{ in }\Omega_2(\mu_2) \\
& U(\mu_2):=L^2(\Omega(\mu_2)), \quad (\cdot,\cdot)_{U(\mu_2)}:=(\cdot,\cdot)_{L^2(\Omega(\mu_2))}, \\
& U_{ad}(\mu_2):=\{u \in L^2(\Omega(\mu_2)): u(x)\geq u_a(x) \mbox{ a.e } x \in \Omega(\mu_2)\},  u_a(x):=-0.5, \\
& (\mu_1,\mu_2) \in \mathcal{P}:=[5, 18]\times[0.8,1.2], \quad \alpha = 10^{-2},
\end{align*}
and  $ Y(\mu_2) \subset \{v \in H^1(\Omega(\mu_2)) \cap C(\overline{\Omega(\mu_2)}):v|_{\Gamma_D(\mu_2)}=1\}$ is the space of piecewise linear and continuous finite elements. The underlying PDE has the homogeneous Neumann boundary condition $\partial_{\eta}y|_{\Gamma_N(\mu_2)}=0$ on the portion $\Gamma_N(\mu_2)$ of the boundary of the domain $\Omega(\mu_2)$ , and the Dirichlet boundary condition $y|_{\Gamma_D(\mu_2)}=1$ on the portion $\Gamma_D(\mu_2)$. An illustration for the domain $\Omega(\mu_2)$ and the boundary segments  $\Gamma_D(\mu_2)$ and $\Gamma_N(\mu_2)$ is given in  Figure~\ref{figure:domain}.

We introduce the lifting function $\tilde y(x):=1$ to handle the nonhomogeneous Dirichlet boundary condition, and reformulate the problem  over the reference domain $\Omega:=\Omega(\mu_2^{\mbox{\scriptsize ref}})$, and endow the state space $Y:=Y(\mu_2^{\mbox{\scriptsize ref}})$  by the inner product $(\cdot,\cdot)_Y$ given by
\[
(v,w)_Y:=\frac{1}{\mu_1^{\mbox{\scriptsize ref}}} \int_{\Omega} \nabla w \cdot \nabla v \, dx + \frac{1}{2}\Big(\int_{\Omega} \beta(x)\cdot\nabla w   v \, dx+ \int_{\Omega} \beta(x)\cdot\nabla v   w \, dx \Big)
\]
where $(\mu_1^{\mbox{\scriptsize ref}},\mu_2^{\mbox{\scriptsize ref}})=(5,1)$. The control space $U:=U(\mu_2^{\mbox{\scriptsize ref}})$ is endowed with a parameter dependent inner product  $(\cdot,\cdot)_{U(\mu_2)}$ from the affine geometry transformation, see \cite{rozza2007reduced}. After transforming the problem over $\Omega$ we deduce that (\ref{norm_eqv}) holds with $\rho_1=\max(\mu_1^{\mbox{\scriptsize ref}}(1+c_p),1)^{-2}$, where the constant $c_p$ is from the Poincar\'{e}'s inequality 
\[
\int_\Omega v^2 \, dx \leq c_p \int_\Omega |\nabla v|^2\, dx \quad \forall \, v \in H^1(\Omega): v|_{\Gamma_D(\mu_2^{\mbox{\scriptsize ref}})}=0.
\]
In addition, we take
\[
\tilde{\beta}(\mu_1, \mu_2)=\min\Big(\mu_1^{\mbox{\scriptsize ref}}\min(\frac{1}{\mu_1 \mu_2},\frac{\mu_2}{\mu_1},\frac{1}{\mu_1}),1 \Big), \mbox{ and } \tilde{\kappa}(\mu_1, \mu_2)=\frac{1}{\rho_1}(\sqrt{\mu_2}+1).
\]

The domain $\Omega$ is partitioned via a uniform triangulation such that $\mbox{dim}(Y) \approx 4900$. The optimality conditions corresponding to the variational discrete control problem and the reduced control problem are solved using a semismooth Newton's method with the stopping criteria 
\[
\frac{1}{\alpha} \| p^{(k)} -p^{(k+1)} \|_{U(\mu_2)} \leq 10^{-11},
\]
where $p^{(k)}$ is the adjoint variable at the $k$-th iteration.

The optimal controls and their active sets for the parameter values $(\mu_1,\mu_2)=(5,0.8)$, $(18,1.2)$ computed on the reference domain are presented in Figure~\ref{figure:controls}.

The reduced basis for the space  $Y_N$ is  constructed applying the Algorithm~\ref{algo:greedy} with the choice  $S_{\mbox{\scriptsize  train}}:=\{(s^1_j,s^2_k)\}$ for $j,k=1,\ldots,30$ where $s^1_j:=5 \times (18/5)^{(j-1)/29}$ and $s^2_k:=(0.4/29)\times(k-1)+0.8$. Furthermore, we take $\mu^1:=(5,0.8)$,  $\varepsilon_{\mbox{\scriptsize tol}} =10^{-8}$, $N_{\mbox{\scriptsize max}}=30$,  and $\Delta(Y_N,\mu):=2 \Delta_u(\mu)/\|u_N\|_{U(\mu_2)}$.

The algorithm terminated at $N_{\mbox{\scriptsize max}}=30$  before reaching the  tolerance $\varepsilon_{\mbox{\scriptsize tol}} $. To asses the quality of the resulting reduced basis and the sharpness of the bound $\Delta(Y_N,\mu)$, 
we compare the maximum of the relative error   $\|u-u_N\|_{U(\mu_2)}/\|u\|_{U(\mu_2)}$ to the  bound  $2 \Delta_u(\mu)/\|u_N\|_{U(\mu_2)}$ computed over the test set $S_{\mbox{\scriptsize  test}}:=\{(s^1_j,s^2_k)\}$, for $j=1,\ldots, 10$ and $k=1, \ldots, 5$ where $s_j:=5.2\times(17.5/5.2)^{(j-1)/9}$, and $s^2_k:=(0.35/4)\times(k-1)+0.82$ for the greedy algorithm iterations $N=1,\ldots,30$. The outcome of the experiment  is presented in Figure~\ref{figure:example_2}. The error decay is of moderate speed in comparison to the previous example. It could be because the current problem has more parameters and one of which stems from the geometry of the domain. We again see the gap between the bound and the error, which supports the prediction of  Remark~\ref{error_gap}.

\section{Conclusions}
With present a reduced basis method for the approximation of optimal control problems with control constraints. We use variational discretization from \cite{hinze2005variational} for the numerical approximation of the optimal control problems. This allows us to use methods from \cite{gong2017adaptive} to prove an error equivalence for our residual based error estimator, which finally is one of the key ingredients for the convergence proof of our approach in Theorem \ref{convergence}. Our numerical results indicate that the reduced basis method combined with variational discretization for a prescribed error tolerance seems to deliver reduced basis spaces of much smaller dimension than in the existing approaches reported in the literature, compare e.g. the numerical results reported in  \cite{bader2016certified}. However, this comes along with a more sophisticated numerical implementation of the variational discretization approach in the case of control constraints, for which the classical offline-online decomposition techniques are not applicable in a straightforward manner.

\begin{figure}[p]
	\centering
	\includegraphics[trim = 80mm 145mm 50mm 90mm, width=0.6\textwidth]{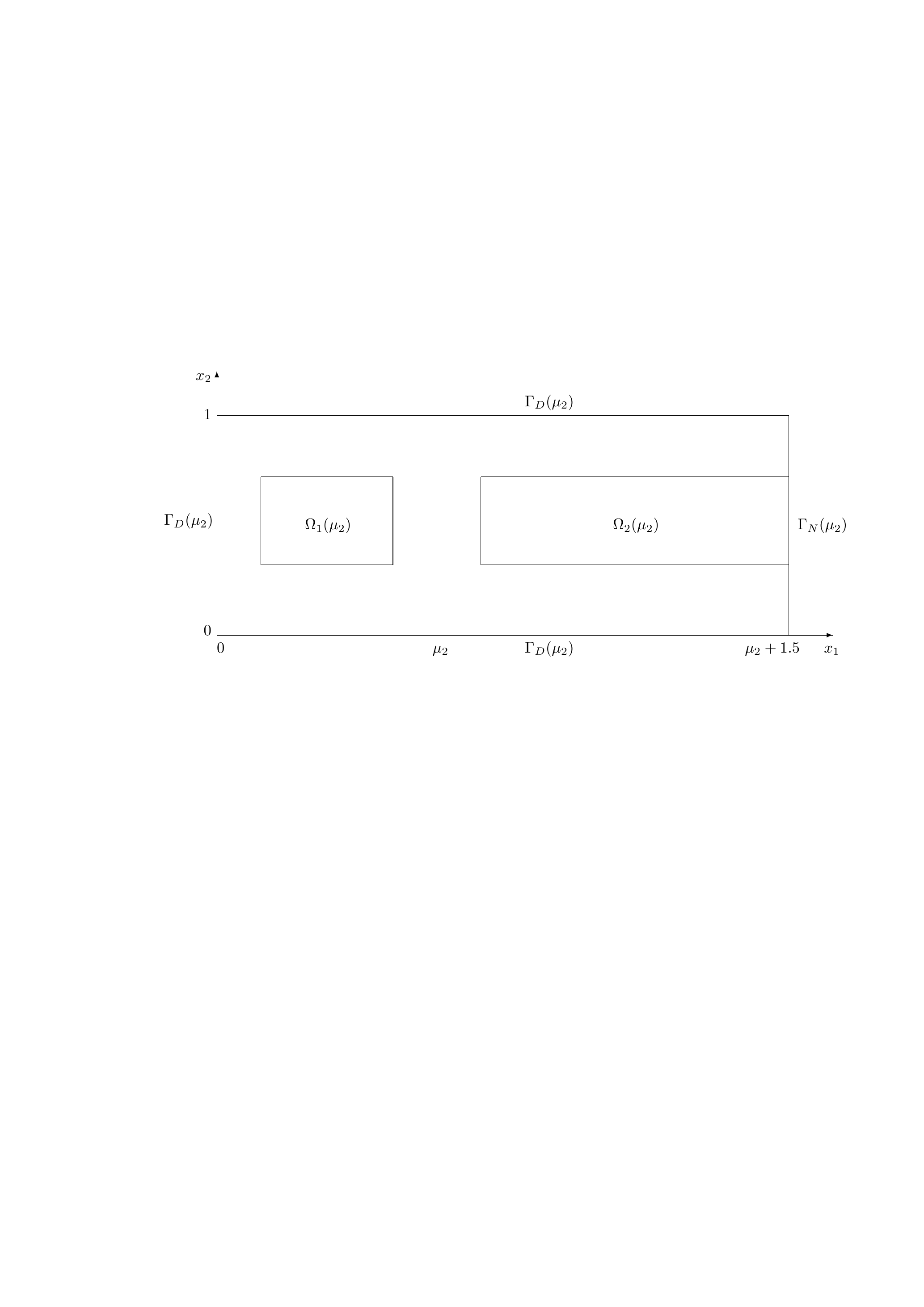}
	\caption{Example~\ref{graetzflow}: The domain $\Omega(\mu_2)$ for the Graetz flow problem.}
	\label{figure:domain}
\end{figure}

\begin{figure}
    \centering
    \begin{subfigure}[b]{0.75\textwidth}
        \includegraphics[trim = 80mm 100mm 50mm 100mm, width=0.5\textwidth]{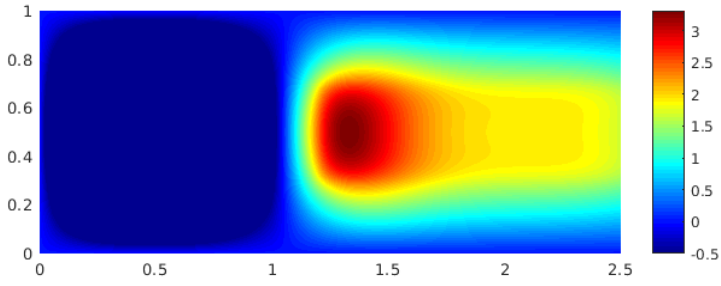}
        \caption{The optimal control for \\$(\mu_1,\mu_2)=(5,0.8)$}
    \end{subfigure}~ 
    \begin{subfigure}[b]{0.75\textwidth}
        \includegraphics[trim = 80mm 100mm 50mm 100mm, width=0.5\textwidth]{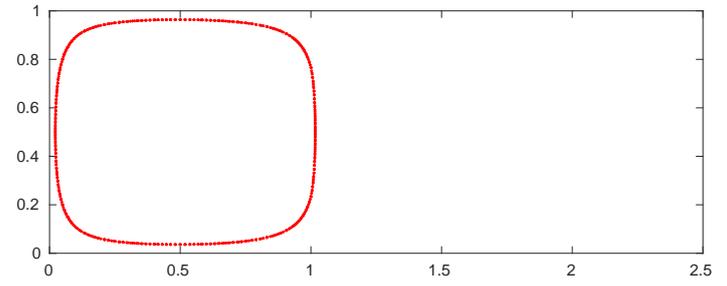}
     \caption{The control active set boundary \\for $(\mu_1,\mu_2)=(5,0.8)$}
    \end{subfigure}
     \begin{subfigure}[b]{0.75\textwidth}
            \includegraphics[trim = 80mm 100mm 50mm 100mm, width=0.5\textwidth]{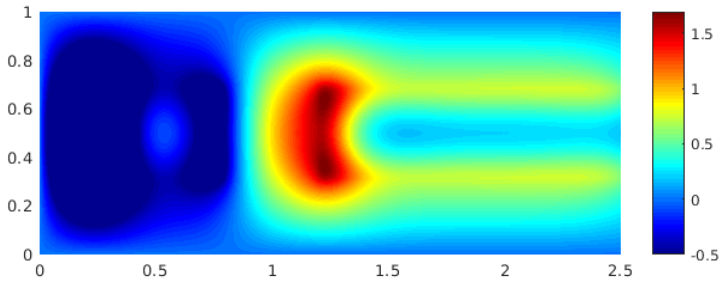}
             \caption{The optimal control for \\$(\mu_1,\mu_2)=(18,1.2)$}
        \end{subfigure}~
     \begin{subfigure}[b]{0.75\textwidth}
            \includegraphics[trim = 80mm 100mm 50mm 100mm, width=0.5\textwidth]{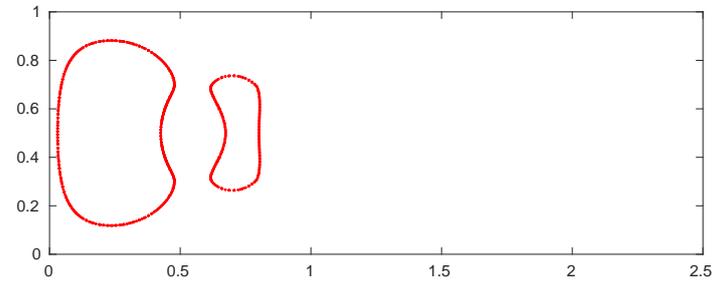}
            \caption{The control active set boundary \\for $(\mu_1,\mu_2)=(18,1.2)$}
        \end{subfigure}
    \caption{Example~\ref{graetzflow}: The optimal controls, and their active sets (enclosed by the curves) for $(\mu_1,\mu_2)=(5,0.8), \mbox{ and } (18,1.2)$ computed on the reference domain $\Omega$.}\label{figure:controls}
\end{figure}

\begin{figure}[p]
	\centering
	\includegraphics[trim = 40mm 85mm 40mm 90mm, width=1\textwidth]{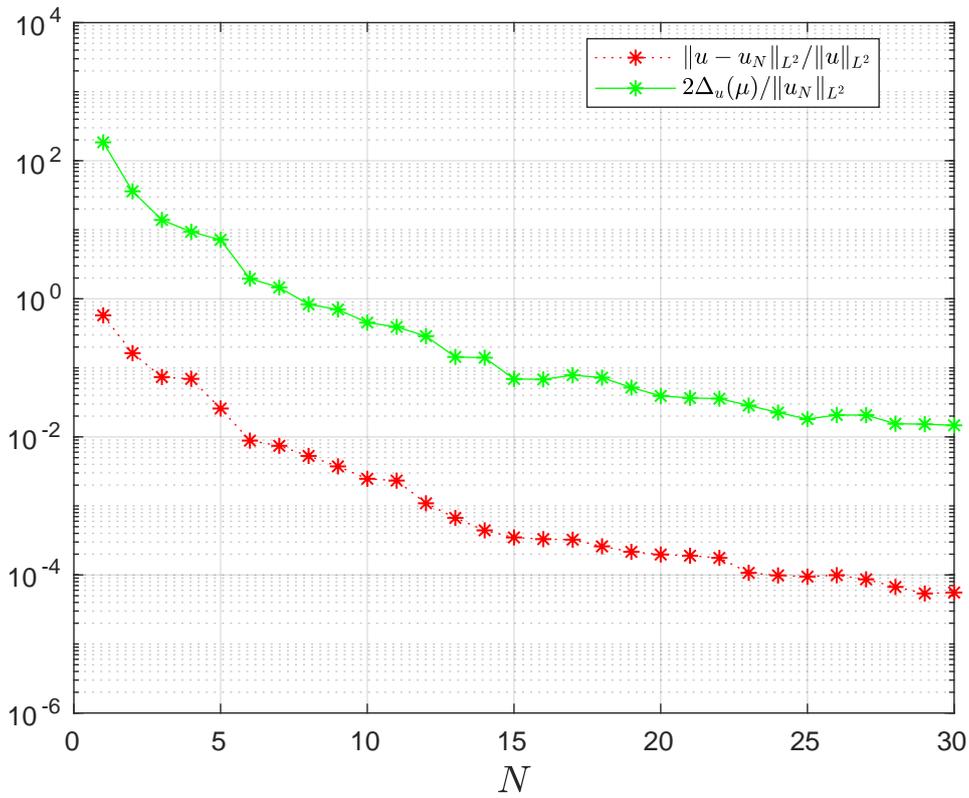}
	\caption{Example~\ref{graetzflow}: The maximum of $\|u-u_N\|_{U(\mu_2)}/\|u\|_{U(\mu_2)}$ the relative error of controls   and the  upper bound $2 \Delta_u(\mu)/\|u_N\|_{U(\mu_2)}$ over $S_{\mbox{\scriptsize  test}}$ versus the greedy algorithm iterations $N=1,\ldots,30$.}
	\label{figure:example_2}
\end{figure}

\newpage
\bibliographystyle{plain}
\bibliography{references}
\end{document}